\newcommand\remove[1]{}
\newcommand{\rnote}[1]{}
\newcommand{\E}{\mathbb E}
\newcommand{\Aut}{\mathsf{Aut}}
\newcommand{\dist}{\mathrm{dist}}
\newcommand{\R}{\mathbb{R}}
\newcommand{\Lip}{\mathrm{Lip}}
\newcommand{\len}{\mathsf{len}}
\newcommand{\tri}{\mathrm{tri}}
\DeclareMathOperator{\diam}{diam}
\newtheorem{theorem}{Theorem}[section]
\newtheorem{lemma}[theorem]{Lemma}
\newtheorem{claim}[theorem]{Claim}
\newtheorem{corollary}[theorem]{Corollary}
\newtheorem{definition}[theorem]{Definition}
\newtheorem{observation}[theorem]{Observation}
\date{}
\begin{document}

\title{{\bf On the optimality of gluing over scales}}
\author{Alexander Jaffe\thanks{Research partially supported by NSF CCF-0644037 and a Sloan Research Fellowship.}
 \and James R. Lee$^*$ \and Mohammad Moharrami$^{*}$ }
\date{}

\maketitle

\begin{abstract}
We show that for every $\alpha > 0$, there exist $n$-point metric spaces
$(X,d)$ where every ``scale'' admits a Euclidean embedding with distortion at most
$\alpha$, but the whole space requires distortion at least $\Omega(\sqrt{\alpha \log n})$.
This shows that the scale-gluing lemma [Lee, SODA 2005]
is tight, and disproves a conjecture stated there.
This matching upper bound was known to be tight at both endpoints, i.e. when $\alpha = \Theta(1)$ and
$\alpha = \Theta(\log n)$, but nowhere in between.

More specifically,
we exhibit $n$-point spaces with doubling constant $\lambda$
requiring Euclidean distortion $\Omega(\sqrt{\log \lambda \log n})$, which also
shows that the
technique of ``measured descent'' [Krauthgamer, et. al., {\em Geometric and Functional Analysis}]
is optimal.  We  extend this to
$L_p$ spaces with $p > 1$, where one requires distortion at least
$\Omega((\log n)^{1/q} (\log \lambda)^{1-1/q})$ when $q=\max\{p,2\}$,
a result which is tight for every $p > 1$.
\end{abstract}
\section{Introduction}

Suppose one is given a collection of mappings from some finite metric space $(X,d)$
into a Euclidean space, each of which reflects the geometry at some ``scale'' of $X$.
Is there a non-trivial way of gluing these mappings together to form
a global mapping which reflects the entire geometry of $X$?   The answers
to such questions have played a fundamental role in the best-known approximation
algorithms for Sparsest Cut \cite{KLMN05,Lee05,CGR05,ALN05} and Graph Bandwidth \cite{Rao99,KLMN05,LeeVol},
and have found applications in approximate multi-commodity max-flow/min-cut theorems
in graphs \cite{Rao99,KLMN05}.  In the present paper, we show that the
approaches of \cite{KLMN05} and \cite{Lee05} are optimal, disproving a conjecture
stated in \cite{Lee05}.

\medskip

Let $(X,d)$ be an $n$-point metric space, and suppose that for every $k \in \mathbb Z$,
we are given a non-expansive mapping $\phi_k : X \to L_2$ which satisfies the following.
For every $x,y \in X$ with $d(x,y) \geq 2^k$, we have
$$
\|\phi_k(x)-\phi_k(y)\| \geq \frac{2^k}{\alpha}.
$$
The Gluing Lemma of \cite{Lee05} (generalizing the approach of \cite{KLMN05}) shows that the existence of such a collection $\{\phi_k\}$ yields
a Euclidean embedding of $(X,d)$ with distortion $O(\sqrt{\alpha \log n})$.  (See Section \ref{sec:prelims}
for the relevant definitions on embeddings and distortion.)
This is known to be tight when $\alpha=\Theta(1)$ \cite{NR03} and also when $\alpha=\Theta(\log n)$ \cite{LLR95,AR98},
but nowhere in between.  In fact, in \cite{Lee05}, the second named author
conjectured that one could achieve $O(\alpha + \sqrt{\log n})$ (this is indeed stronger,
since one can always construct $\{\phi_k\}$ with $\alpha = O(\log n)$).

In the present paper, we give a family of examples which shows that the $\sqrt{\alpha \log n}$ bound is tight for any
dependence $\alpha(n) = O(\log n)$.  In fact, we show more.  Let $\lambda(X)$ denote the {\em doubling
constant} of $X$, i.e. the smallest number $\lambda$ so that every open ball in $X$ can be covered
by $\lambda$ balls of half the radius.  In \cite{KLMN05}, using the method of ``measure descent,''
the authors show that $(X,d)$ admits a Euclidean embedding with distortion $O(\sqrt{\log \lambda(X) \log n})$.
(This is a special case of the Gluing Lemma since one can always find $\{\phi_k\}$
with $\alpha = O(\log \lambda(X))$ \cite{GKL03}).  Again, this bound was known to be tight
for $\lambda(X) = \Theta(1)$ \cite{Laakso,Lang,GKL03} and $\lambda(X) = n^{\Theta(1)}$ \cite{LLR95,AR98},
but nowhere in between.  We provide the matching lower bound for any dependence of $\lambda(X)$ on $n$.
We also generalize our method to give tight lower bounds on $L_p$ distortion
for every fixed $p > 1$.

\medskip
\noindent
{\bf Construction and analysis.}  In some sense, our lower bound examples are an
interpolation between the multi-scale method of \cite{NR03} and \cite{Laakso},
and the expander Poincar\'e inequalities of \cite{LLR95,AR98,MatExp}.
We start with a vertex-transitive expander graph $G$ on $m$ nodes.  If $D$ is the diameter
of $G$, then we create $D+1$ copies $G^{1}, G^{2}, \ldots, G^{D+1}$ of $G$
where $u \in G^{i}$ is connected to $v \in G^{i+1}$ if $(u,v)$ is an edge in $G$,
or if $u=v$.  We then connect a vertex $s$ to every node in $G^{1}$ and
a vertex $t$ to every node in $G^{D+1}$ by edges of length $D$.
This yields the graph $\vec G$ described in Section \ref{sec:vecG}.

In Section \ref{sec:LB}, we show that whenever there is a non-contracting
embedding $f$ of $\vec G$ into $L_2$, the following holds.
If $\gamma = \frac{\|f(s)-f(t)\|}{d_{\vec G}(s,t)}$, then some edge
of $\vec G$ gets stretched by at least $\sqrt{\gamma^2 + \Omega(\log m)^2}$,
i.e. there is a ``stretch increase.''
This is proved by combining the uniform convexity of $L_2$ (i.e. the Pythagorean
theorem), with the well-known contraction property of expander graphs
mapped into Hilbert space.  To convert the ``average'' nature of this contraction
to information about a specific edge, we symmetrize the embedding
over all automorphisms of $G$ (which was chosen to be vertex-transitive).

 To exploit this stretch increase
recursively, we construct a graph $\vec G^{\oslash k}$ inductively as follows:
$\vec G^{\oslash k}$ is formed by replacing every edge of $\vec G^{\oslash k-1}$
by a copy of $\vec G$ (see Section \ref{sec:oslash} for the formal definitions).
Now a simple induction shows that in a non-contracting embedding
of $\vec G^{\oslash k}$, there must be an edge stretched by at least $\Omega(\sqrt{k} \log m)$.
In Section \ref{sec:Lp}, a similar argument is made for $L_p$ distortion, for $p > 1$,
but here we have to argue about ``quadrilaterals'' instead of ``triangles''
(in order to apply the uniform convexity inequality in $L_p$), and it requires
slightly more effort to find a good quadrilateral.

Finally, we observe that if $\widetilde{G}$ is the graph formed by adding two tails of length $3D$ hanging off $s$ and $t$
in $\vec G$, then (following the analysis of \cite{Laakso,Lang}), one has
$\log \lambda(\widetilde{G}^{\oslash k}) \lesssim  \log m$.
The same lower bound analysis also works for $\widetilde{G}^{\oslash k}$, so
since
$n = |V(\widetilde{G}^{\oslash k})| = 2^{\Theta(k \log m)}$, the lower bound is
$$\sqrt{k} \log m \approx \sqrt{\log m \log n} \gtrsim \sqrt{\log \lambda(\widetilde{G}^{\oslash k}) \log n},$$
completing the proof.

\subsection{Preliminaries}
\label{sec:prelims}

   For a graph $G$, we will use $V(G), E(G)$ to denote the sets of vertices and edges of $G$, respectively.
    Sometimes we will equip $G$ with a non-negative length function $\len : E(G) \to \mathbb R_+$,
    and we let $d_{\len}$ denote the shortest-path (semi-)metric on $G$.
    We refer to the pair $(G,\len)$ as a {\em metric graph,} and often $\len$ will be implicit, in which
case we use $d_G$ to denote the path metric.
We use $\Aut(G)$ to denote the group of automorphisms of $G$.

Given two expressions $E$ and $E'$ (possibly depending on a number of parameters), we write $E = O(E')$ to mean that $E \leq C E'$
for some constant $C > 0$ which is independent of the parameters. Similarly, $E = \Omega(E')$ implies that $E \geq C E'$ for some $C > 0$.
We also write $E \lesssim E'$ as a synonym for $E = O(E')$.  Finally, we write $E \approx E'$ to denote
the conjunction of $E \lesssim E'$ and $E \gtrsim E'$.

\medskip
\noindent
{\bf Embeddings and distortion.}
If $(X,d_X),(Y,d_Y)$ are metric spaces, and
$f : X \to Y$, then we write $$\|f\|_\Lip = \sup_{x \neq y \in X} \frac{d_Y(f(x),f(y))}{d_X(x,y)}.$$
If $f$ is injective, then the {\em distortion of $f$} is
defined by $\dist(f) = \|f\|_\Lip \cdot \|f^{-1}\|_\Lip$.
A map with distortion $D$ will sometimes be referred to as {\em $D$-bi-lipschitz.}
If $d_Y(f(x),f(y)) \leq d_X(x,y)$ for every $x,y \in X$,
we say that $f$ is {\em non-expansive.}
If $d_Y(f(x),f(y)) \geq d_X(x,y)$ for every $x,y \in X$,
we say that $f$ is {\em non-contracting.}
For a metric space $X$, we use $c_p(X)$ to denote the least distortion required to embed $X$ into some $L_p$ space.

Finally, for $x \in X$, $r \in \R_+$, we define the open ball $B(x,r) = \{y \in X : d(x,y) < r\}$. Recall that the {\em doubling constant} of a metric space $(X,d)$ is the
infimum over all values $\lambda$ such that every ball in $X$ can be covered
by $\lambda$ balls of half the radius.  We use $\lambda(X,d)$ to denote this value.

We now state the main theorem of the paper.

\begin{theorem}
For any positive nondecreasing function $\lambda(n)$, there exists a family of $n$-vertex metric graphs
$\widetilde{G}^{\oslash k}$ such that $\lambda(\widetilde{G}^{\oslash k}) \lesssim \lambda(n)$,
and for every fixed $p > 1$, $$c_p(\widetilde{G}^{\oslash k}) \gtrsim (\log n)^{1/q} (\log \lambda(n))^{1-1/q},$$ where $q = \max\{p,2\}$.
\end{theorem}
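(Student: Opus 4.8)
The plan is to follow the roadmap sketched in the introduction, realizing each claimed step with explicit constants. First I would formally construct the base graph $\vec G$: take a vertex-transitive expander $G$ on $m$ nodes with spectral gap bounded below by an absolute constant, let $D = \diam(G)$, stack $D+1$ copies $G^1,\dots,G^{D+1}$ with the diagonal/edge connections described, and attach apexes $s,t$ via edges of length $D$ so that $d_{\vec G}(s,t) \approx D$ while every other edge has length $1$. Then define $\vec G^{\oslash k}$ by the edge-replacement (substitution) operation, keeping careful track of how edge lengths rescale at each level, so that $\vec G^{\oslash k}$ is a metric graph with $|V(\vec G^{\oslash k})| = 2^{\Theta(k\log m)}$. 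The analogous ``tailed'' graph $\widetilde G$ adds tails of length $3D$ off $s$ and $t$; the purpose of the tails is purely to make the substitution self-similar enough that the doubling estimate $\log\lambda(\widetilde G^{\oslash k}) \lesssim \log m$ goes through by the Laakso--Lang-style argument, and I would prove that doubling bound by exhibiting, for each scale, an efficient net inside any ball, using the recursive structure.

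The analytic heart is the single-scale ``stretch increase'' lemma: for any non-contracting $f:\vec G \to L_p$, writing $\gamma = \|f(s)-f(t)\|/d_{\vec G}(s,t)$, some edge of $\vec G$ is stretched by at least $\bigl(\gamma^q + \Omega(\log m)^q\bigr)^{1/q}$ with $q = \max\{p,2\}$. For $p=2$ the argument is: by vertex-transitivity, symmetrize $f$ over $\Aut(G)$ acting diagonally on all copies, which does not increase any edge stretch and makes the embedding ``equivariant''; then the $s$-to-$t$ displacement decomposes as a telescoping sum of the layer-to-layer displacements, and the Pythagorean theorem splits each layer step into a ``parallel'' component (advancing from $s$ toward $t$) and a ``perpendicular'' component; the parallel components must sum to at least $\gamma \cdot d_{\vec G}(s,t)$, while the expander Poincaré inequality, applied within the symmetrized embedding of a single copy $G^i$, forces the perpendicular movement on the diagonal edges to be $\Omega(\log m)$ on average; combining via Pythagoras on a single well-chosen edge gives the claimed $\sqrt{\gamma^2 + \Omega(\log m)^2}$. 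For general $p>1$ one cannot use a right triangle, so instead I would locate a ``quadrilateral'' $s,a,b,t$ in $\vec G$ along which to apply the uniform convexity (Clarkson / Hanner) inequality of $L_p$; the extra work, as the authors warn, is in an averaging argument showing such a quadrilateral with both a large ``$s\to t$'' diagonal and a large ``expander'' diagonal exists, and this is the step I expect to be the main obstacle.

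Given the single-scale lemma, the recursion is a clean induction on $k$. Suppose $f:\vec G^{\oslash k}\to L_p$ is non-contracting; the top-level copy of $\vec G$ has its edges realized as paths through copies of $\vec G^{\oslash k-1}$. Restricting $f$ to the images of $s$ and $t$ of the top copy and to a single top-level edge, the single-scale lemma says that along some top-level edge the effective ``$s$-$t$ slope'' $\gamma$ increases to $(\gamma^q + \Omega(\log m)^q)^{1/q}$. Descending into the copy of $\vec G$ sitting on that edge and iterating $k$ times, starting from $\gamma_0 = 1$ (non-contraction), yields $\gamma_k^q \geq \Omega(\log m)^q \cdot k$, hence some genuine edge of $\vec G^{\oslash k}$ is stretched by $\Omega(k^{1/q}\log m)$. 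Since $f$ is non-contracting and the minimum edge length is comparable to the minimum pairwise distance, this forces $c_p(\vec G^{\oslash k}) \gtrsim k^{1/q}\log m = k^{1/q}\log m$. The same induction runs verbatim for $\widetilde G^{\oslash k}$ because the tails do not interfere with the $s$-$t$ displacement or the expander inequality.

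Finally I would assemble the parameters. Given the target function $\lambda(n)$, choose the expander size $m$ (as a function of the eventual $n$) so that $\log m \approx \log \lambda(n)$; this is possible because $\lambda(n)$ is nondecreasing and we are free to pick $k$ as well. Then $n = |V(\widetilde G^{\oslash k})| = 2^{\Theta(k\log m)}$ gives $k \approx \frac{\log n}{\log m} \approx \frac{\log n}{\log \lambda(n)}$, the doubling bound gives $\lambda(\widetilde G^{\oslash k}) \lesssim m \lesssim \lambda(n)$, and the lower bound becomes
\[
c_p(\widetilde G^{\oslash k}) \gtrsim k^{1/q}\log m \approx \left(\frac{\log n}{\log \lambda(n)}\right)^{1/q} (\log \lambda(n)) = (\log n)^{1/q}(\log \lambda(n))^{1-1/q},
\]
which is exactly the claimed bound. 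One loose end to handle carefully is that $n$ takes only discrete values $2^{\Theta(k\log m)}$; this is dealt with by a padding argument, attaching a long path or a low-doubling ``filler'' so that the family realizes every $n$ up to constant factors without changing either the doubling constant or the distortion lower bound by more than constants.
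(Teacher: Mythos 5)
Your outline matches the paper's approach in virtually every respect: the same construction of $\vec G$ and $\widetilde G$, the same symmetrization over $\Aut(G)$, the same Pythagorean/Poincar\'e stretch-increase lemma at a single scale, the same induction over the $\oslash$-levels, and the same assembly of parameters at the end (with the same implicit discreteness issue, which the paper also passes over). For $p=2$ your sketch is essentially complete and correct.

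The one genuine gap is exactly the one you flag: you do not actually carry out the $L_p$ quadrilateral argument, and your description of it is slightly off. You picture a quadrilateral ``$s,a,b,t$'' spanning the whole graph, with one diagonal a large ``$s\to t$'' displacement and the other a large ``expander'' displacement. The paper instead uses a \emph{local} quadrilateral inside a window of three consecutive layers. The telescoping argument is run on double steps $r^{(2j-1)} \to r^{(2j+1)}$, producing an index $j$ where the two-layer displacement $\rho_j = \|\bar f(r^{(2j-1)}) - \bar f(r^{(2j+1)})\|_p$ is close to $2\gamma$, while (by a separate escape clause, which you omit) both one-step side lengths $\rho_{j,1},\rho_{j,2}$ are close to $\gamma$. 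Then for an expander edge $(u^{(2j)},v^{(2j)})$ in the middle layer with $\|f(u^{(2j)})-f(v^{(2j)})\|_p \geq \beta$, the quadrilateral has vertices $v^{(2j-1)},v^{(2j)},v^{(2j+1)},u^{(2j)}$; its two diagonals are the long two-step displacement and the long expander edge, and its four sides are all graph edges. Applying the BCL four-point uniform-convexity inequality (Lemma~\ref{lem:Lp}) and subtracting the side contributions $\rho_{j,1}^q,\rho_{j,2}^q$ forces one of the two diagonal-type edges $(u^{(2j)},v^{(2j\pm1)})$ to have $q$-th power of stretch increased by $\Omega_p(\beta^q)$. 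Without the double-step telescoping and the $\rho_{j,1},\rho_{j,2}$ control, the four-point inequality does not close: the sides could swallow the gain. So as it stands the $p\neq2$ case is a missing piece, not a routine extension, and should be worked out along these lines before the theorem can be claimed.
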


\section{Metric construction}

\subsection{$\oslash$-products}
\label{sec:oslash}

An $s$-$t$ graph $G$ is
a graph which has two distinguished vertices $s,t \in V(G)$.  For an $s$-$t$ graph,
we use $s(G)$ and $t(G)$ to denote the vertices labeled $s$ and $t$, respectively.
We define the length of an $s$-$t$ graph $G$ as $\len(G) = d_{\len}(s,t)$.

\begin{figure}
\centering
\includegraphics[width=12cm]{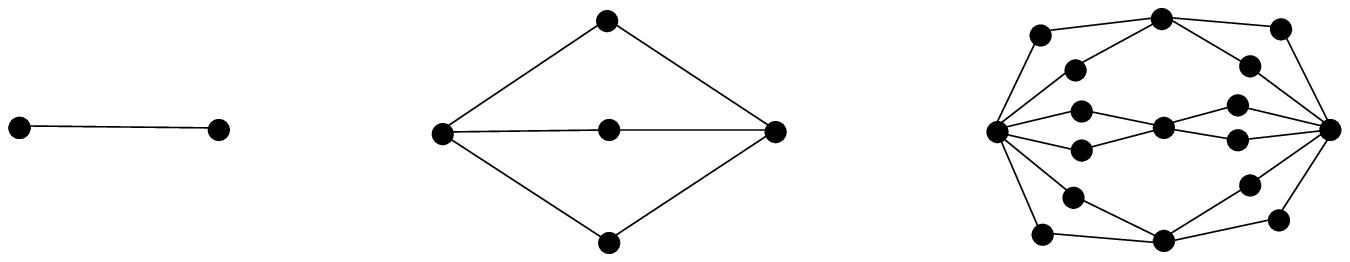}
\caption{A single edge $H$, $H \oslash K_{2,3}$, and $H \oslash K_{2,3} \oslash K_{2,2}$.}
\label{fig:oslash}
\end{figure}

\begin{definition}[Composition of $s$-$t$ graphs]
Given two $s$-$t$ graphs $H$ and $G$, define $H \oslash G$ to be the $s$-$t$
graph obtained by replacing each edge $(u,v) \in E(H)$ by a copy of $G$
(see Figure \ref{fig:oslash}).
Formally,
\begin{itemize}
\item $V(H \oslash G) = V(H) \cup \left( \vphantom{\bigoplus} E(H) \times (V(G) \setminus \{s(G),t(G)\}) \right).$
\item For every edge $e=(u,v) \in E(H)$, there are $|E(G)|$ edges,
\begin{eqnarray*}
&&
\!\!\!\!\!\!\!\!\!\!\!\!\!\!\!
\left\{ \left(\vphantom{\bigoplus} (e,v_1), (e,v_2) \right)\,|\, (v_1,v_2) \in E(G) \textrm{ and } v_1, v_2 \notin \{s(G),t(G)\}\right\}
\cup
\\
&&  \left\{ \left(\vphantom{\bigoplus} u, (e, w)\right) \,|\, (s(G),w) \in E(G) \right\}
\cup \left\{ \left(\vphantom{\bigoplus}  (e, w),v\right) \,|\, (w,t(G)) \in E(G) \right\}
\end{eqnarray*}
\item $s(H \oslash G) = s(H)$ and $t(H \oslash G) = t(H)$.
\end{itemize}

If $H$ and $G$ are equipped with length functions $\len_H, \len_G$, respectively,
we define $\len=\len_{H \oslash G}$ as follows.  Using the preceding notation,
for every edge $e = (u,v) \in E(H)$,
\begin{eqnarray*}
\len\left((e,v_1),(e,v_2)\right) &=& \frac{\len_H(e)}{d_{\len_G}(s(G),t(G))} \len_G(v_1,v_2) \\
\len\left(u,(e,w)\right) &=& \frac{\len_H(e)}{d_{\len_G}(s(G),t(G))} \len_G(s(G),w) \\
\len\left((e,w),v\right) &=& \frac{\len_H(e)}{d_{\len_G}(s(G),t(G))} \len_G(w,t(G)).
\end{eqnarray*}
This choice implies that $H \oslash G$ contains an isometric copy of $(V(H),d_{\len_H})$.
\end{definition}

Observe that there is some ambiguity in the definition above, as there are two ways
to substitute an edge of $H$ with a copy of $G$, thus we assume
that there exists some arbitrary orientation of the edges of $H$.  However, for our purposes
the graph $G$ will be symmetric, and thus the orientations are irrelevant.

\begin{definition}[Recursive composition]
For an $s$-$t$ graph $G$ and a number $k \in \mathbb N$, we define
$G^{\oslash k}$ inductively by letting $G^{\oslash 0}$ be a single edge of unit length,
and setting $G^{\oslash k} = G^{\oslash k-1} \oslash G$.
\end{definition}

The following result is straightforward.

\begin{lemma}[Associativity of $\oslash$]
For any three graphs $A,B,C$, we have $(A \oslash B) \oslash C = A
\oslash (B \oslash C)$, both graph-theoretically and as metric
spaces.
\end{lemma}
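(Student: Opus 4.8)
The plan is to produce an explicit length‑preserving graph isomorphism $\Phi : (A\oslash B)\oslash C \to A\oslash(B\oslash C)$ that fixes the distinguished vertices $s(A),t(A)$. Since the shortest‑path (semi‑)metric of a metric graph is completely determined by its underlying graph together with its length function, such a $\Phi$ establishes the identity both graph‑theoretically and as metric spaces in a single stroke. Throughout I would assume (as holds for every graph used later in the paper) that no graph in sight has an edge joining its own $s$ to its own $t$; this removes the only boundary ambiguity in the edge list of the definition.

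First I would unwind the two vertex sets. Writing $V^{\circ}(G) := V(G)\setminus\{s(G),t(G)\}$ and applying the definition twice,
$$V\big((A\oslash B)\oslash C\big) = V(A)\,\cup\,\big(E(A)\times V^{\circ}(B)\big)\,\cup\,\big(E(A\oslash B)\times V^{\circ}(C)\big),$$
whereas, using $s(B\oslash C)=s(B)$ and $t(B\oslash C)=t(B)$ — so that $V^{\circ}(B\oslash C)=V^{\circ}(B)\cup\big(E(B)\times V^{\circ}(C)\big)$ —
$$V\big(A\oslash(B\oslash C)\big) = V(A)\,\cup\,\big(E(A)\times V^{\circ}(B)\big)\,\cup\,\big(E(A)\times E(B)\times V^{\circ}(C)\big).$$
These agree on the first two summands, and on the third once one fixes the canonical bijection $E(A\oslash B)\cong E(A)\times E(B)$: an edge of $A\oslash B$ lying in the copy of $B$ substituted for $e\in E(A)$ is named by the edge of $B$ from which it descends, an edge of $B$ incident to $s(B)$ or $t(B)$ being matched with the corresponding edge incident to an endpoint of $e$. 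Together with associativity of the Cartesian product, this defines $\Phi$ on vertices, acting as the identity on $V(A)$ and on $E(A)\times V^{\circ}(B)$.

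Next I would check that $\Phi$ takes edges to edges; this is the one part with genuine bookkeeping. Every edge of $(A\oslash B)\oslash C$ lies inside the copy of $C$ substituted for some edge $\tilde e$ of $A\oslash B$, and $\tilde e$ in turn lies inside the copy of $B$ substituted for some $e=(u,v)\in E(A)$. Running through the three cases in the definition — both endpoints interior to that copy of $C$; one endpoint an endpoint of $\tilde e$; one endpoint further equal to $u$ or $v$ — one checks that each such edge matches precisely one edge of $A\oslash(B\oslash C)$, namely the one produced by first forming $B\oslash C$ and then substituting it for $e$, since the very same endpoint classification arises on that side. (This is where the no‑$s$‑$t$‑edge assumption keeps the case split clean.)

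Finally I would verify length preservation, which reduces to a telescoping identity. Abbreviate $\len(G) := d_{\len_G}(s(G),t(G))$; the remark accompanying the definition — that $H\oslash G$ contains an isometric copy of $(V(H),d_{\len_H})$ — gives in particular $\len(B\oslash C)=d_{\len_{B\oslash C}}(s(B),t(B))=d_{\len_B}(s(B),t(B))=\len(B)$. An innermost edge descending from $g\in E(C)$, inside the copy of $C$ for $f\in E(B)$, inside the copy of $B$ for $e\in E(A)$, has length $\tfrac{\len_{A\oslash B}(\tilde e)}{\len(C)}\,\len_C(g)=\tfrac{\len_A(e)\,\len_B(f)}{\len(B)\,\len(C)}\,\len_C(g)$ in $(A\oslash B)\oslash C$, while its $\Phi$‑image has length $\tfrac{\len_A(e)}{\len(B\oslash C)}\cdot\tfrac{\len_B(f)\,\len_C(g)}{\len(C)}$ in $A\oslash(B\oslash C)$; these coincide because $\len(B\oslash C)=\len(B)$. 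The remaining edge types — those touching a vertex of $A$ or of $B$ — are handled identically, with $\len_C(g)$ replaced by the relevant $\len_C(s(C),w)$, $\len_C(w,t(C))$, or factor $\len_B(\cdot)$. I expect the main (indeed only) obstacle to be keeping the nested indices straight in the edge case analysis of the third step; conceptually nothing beyond ``chase the definitions'' is required, which is presumably why the paper labels the statement straightforward.
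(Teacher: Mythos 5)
The paper offers no proof of this lemma --- it is simply labeled ``straightforward'' --- so there is nothing to compare against, but your proof is a correct and complete elaboration of exactly the definition-chase the authors evidently had in mind. You build the canonical bijection on vertices via $E(A\oslash B)\cong E(A)\times E(B)$ and associativity of the Cartesian product, check the three-way edge classification on both sides, and verify lengths via the telescoping identity hinging on $\len(B\oslash C)=\len(B)$, which follows from the isometry remark attached to the definition; all of this is right, and the innermost-edge calculation you spell out is representative of the others. Two small remarks. First, your no-$s$-$t$-edge hypothesis is a sensible way to resolve a genuine ambiguity in the paper's edge list (if $(s(G),t(G))\in E(G)$ then the second and third clauses would reference the nonexistent vertices $(e,t(G))$ and $(e,s(G))$); since $\widetilde G$ never has such an edge, this costs nothing for the paper's application, but it would be worth saying explicitly that the lemma as stated needs some such convention. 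Second, you do not address the orientation issue that the paper itself flags in the paragraph after the definition: for the vertex bijection and edge matching to go through in the non-symmetric case, the orientations on $E(A\oslash B)$ must be the ones inherited consistently from $E(A)$ and $E(B)$, not arbitrary; again this is moot in the paper's symmetric setting, but a sentence fixing the induced orientation would make the general statement airtight.
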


\begin{definition}
   For two graphs $G$, $H$, a subset of vertices $X \subseteq V(H)$ is said to be a {\it copy} of $G$ if there exists a
   bijection $f : V(G) \rightarrow X$ with distortion 1, i.e. $d_{H}(f(u),f(v)) = C\cdot d_{G}(u,v)$ for some constant $C > 0$.
\end{definition}

   Now we make the following two simple observations about copies of $H$ and $G$ in $H \oslash G$.
   \begin{observation}\label{obs1}
   The graph $H \oslash G$ contains $|E(H)|$ distinguished copies of the graph $G$, one copy corresponding to each edge in $H$.
   \end{observation}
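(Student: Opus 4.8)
The plan is to read the required copies straight off the definition of $H \oslash G$ and then verify that each one sits isometrically, up to a global rescaling, inside $H \oslash G$. Fix an edge $e = (u,v) \in E(H)$ and put $C_e = \len_H(e)/d_{\len_G}(s(G),t(G))$. Let $X_e = \{u,v\} \cup \big(\{e\}\times (V(G)\setminus\{s(G),t(G)\})\big) \subseteq V(H\oslash G)$, and define $f_e : V(G) \to X_e$ by $f_e(s(G)) = u$, $f_e(t(G)) = v$, and $f_e(w) = (e,w)$ otherwise. This is a bijection by construction. Inspecting the three families of edges of $H\oslash G$ in the definition, the ones with both endpoints in $X_e$ are exactly the $f_e$-images of the edges of $G$, and the length function $\len_{H\oslash G}$ assigns to each of them precisely $C_e$ times the length of the corresponding edge of $G$. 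Hence $f_e$ is a graph isomorphism from $G$ onto the subgraph of $H\oslash G$ induced on $X_e$ which multiplies every edge length by the single constant $C_e$.

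It remains to upgrade this to a metric statement: $d_{H\oslash G}(f_e(x),f_e(y)) = C_e\, d_{\len_G}(x,y)$ for all $x,y \in V(G)$. The inequality ``$\le$'' is immediate, since a shortest $x$--$y$ path in $G$ maps to an $f_e(x)$--$f_e(y)$ walk of length $C_e\, d_{\len_G}(x,y)$. For ``$\ge$'' the key structural point is that, by the construction of $H\oslash G$, every internal vertex $(e,w)$ is adjacent only to vertices carrying the same label $e$ and to the two ``portals'' $u$ and $v$; thus $X_e$ communicates with the rest of $H\oslash G$ only through $u$ and $v$. Consequently any $f_e(x)$--$f_e(y)$ path that leaves $X_e$ can be cut at the portals, an excursion that exits and re-enters through the same portal can only be shortened by deleting it, and so one is reduced to bounding a path of the form ``stay in $X_e$ from $f_e(x)$ to $u$, detour through the exterior to $v$, stay in $X_e$ from $v$ to $f_e(y)$'' (or the same with $u,v$ swapped). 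Such a path has length at least $C_e\, d_{\len_G}(x,s(G)) + d_{H\oslash G}(u,v) + C_e\, d_{\len_G}(t(G),y)$, and since the exterior detour cannot beat the direct copy one has $d_{H\oslash G}(u,v) = \len_H(e) = C_e\, d_{\len_G}(s(G),t(G))$; the triangle inequality in $G$ then makes this lower bound at least $C_e\, d_{\len_G}(x,y)$. (The equality $d_{H\oslash G}(u,v)=\len_H(e)$ uses that $H$ embeds isometrically into $H\oslash G$ and that each edge of $H$ realizes the distance between its endpoints, which is automatic for the $s$--$t$ graphs considered here, being built from a single edge by repeated $\oslash$, and is otherwise easily maintained by a parallel induction.) Thus $f_e$ is a scaled isometry and $X_e$ is a copy of $G$.

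Finally, the sets $X_e$ for distinct $e \in E(H)$ are pairwise distinct, since each internal vertex $(e,w)$ records the label $e$, so this exhibits exactly $|E(H)|$ such copies, one per edge of $H$. The only step with any content is the ``$\ge$'' direction, i.e.\ ruling out a shortcut through the rest of $H\oslash G$; but as indicated this is forced by the triangle inequality once one records that the copy touches the rest of the graph only at its two portal vertices, so I expect no real obstacle here --- the statement is genuinely a bookkeeping consequence of the definition of $\oslash$.
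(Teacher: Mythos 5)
Your proof is correct, and it is in fact more careful than the paper itself, which states this as an unproved ``simple observation'' right after the definition of $\oslash$: the identification of $X_e$, the bijection $f_e$, the upper bound by pushing paths of $G$ into the copy, and the lower bound by cutting any escaping path at the two portal vertices $u,v$ are exactly the verification the authors leave implicit. The one point worth stressing is the hypothesis you isolate in your parenthetical: the distortion-$1$ (``copy'') claim genuinely needs each edge of $H$ to realize the shortest-path distance between its endpoints. Without that, the observation is false as literally stated: take $H$ a triangle with edge lengths $10,1,1$ and $G$ a path with two unit edges through vertices $s(G),w,t(G)$; in the copy along the length-$10$ edge $e=(u,v)$ one gets $d_{H\oslash G}(u,(e,w))=5$ while $d_{H\oslash G}(u,v)=2$, so no single scaling constant $C$ can work. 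Thus your argument proves the observation under the implicit assumption that edges of $H$ are geodesic, which does hold for $\widetilde G$ and is preserved under $\oslash$; since the observation is applied with $H=\widetilde G^{\oslash k-1}$ in the recursive lower bound, it would be worth recording that one-line ``parallel induction'' explicitly rather than only gesturing at it.
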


   \begin{observation}\label{obs2}
   The subset of vertices $V(H) \subseteq V(H \oslash G)$ form an isometric {\it copy} of $H$.
   \end{observation}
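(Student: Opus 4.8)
The statement to prove is Observation \ref{obs2}: that the vertex set $V(H) \subseteq V(H \oslash G)$, with the metric inherited from $d_{\len_{H \oslash G}}$, is an isometric (up to a global scaling constant) copy of $(V(H), d_{\len_H})$. The key already recorded in the definition is the remark that the length function $\len_{H\oslash G}$ was chosen precisely so that ``$H \oslash G$ contains an isometric copy of $(V(H),d_{\len_H})$.'' So the task is to make that remark rigorous. The plan is to show two inequalities for each pair $u,v \in V(H)$: that $d_{H\oslash G}(u,v) \le d_{\len_H}(u,v)$ (paths in $H$ lift to paths in $H \oslash G$ of the same length), and that $d_{H\oslash G}(u,v) \ge d_{\len_H}(u,v)$ (any path in $H \oslash G$ between two $H$-vertices projects to a walk in $H$ of no greater length).

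\textbf{Upper bound.} First I would fix an edge $e = (a,b) \in E(H)$ and observe that inside the corresponding copy of $G$ in $H\oslash G$ (Observation \ref{obs1}), the distance between the endpoints $a$ and $b$ is exactly $\len_H(e)$: indeed, by the length-scaling rule every edge-length inside that copy of $G$ is multiplied by the factor $\len_H(e)/d_{\len_G}(s(G),t(G))$, so every path from $s(G)$ to $t(G)$ in the original $G$ becomes a path from $a$ to $b$ of length (original length) $\times \len_H(e)/d_{\len_G}(s(G),t(G))$; minimizing over such paths gives exactly $\len_H(e)$. (One should also check that no shortcut through other copies of $G$ helps, but for the upper bound we only need the existence of one path of length $\len_H(e)$.) Hence for any path $u = x_0, x_1, \dots, x_\ell = v$ in $H$, concatenating these within-copy geodesics yields a walk in $H\oslash G$ of total length $\sum_i \len_H(x_{i-1},x_i)$; taking the shortest such path in $H$ gives $d_{H\oslash G}(u,v) \le d_{\len_H}(u,v)$.

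\textbf{Lower bound.} For the reverse direction, I would define a projection map $\pi : V(H \oslash G) \to V(H)$ that sends each original vertex of $H$ to itself and sends each vertex $(e,w)$ with $e = (a,b)$ to one of the endpoints, say $a$ (the choice will not matter for the argument). The claim is that $\pi$ is non-expansive after accounting for the scaling: for every edge $(x,y) \in E(H\oslash G)$, one has $d_{\len_H}(\pi(x),\pi(y)) \le \len_{H\oslash G}(x,y)$. For an edge lying strictly inside the copy of $G$ associated to $e=(a,b)$, both endpoints project into $\{a,b\}$, and the distance in $H$ between them is either $0$ or $\len_H(e)$; one checks the latter is bounded by the corresponding scaled $\len_G$-length summed along a geodesic, which follows because $d_{\len_G}(s(G),t(G))$ is the minimum over such $G$-paths. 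For the boundary edges $(a,(e,w))$ and $((e,w),b)$ the projection collapses the edge or maps it to an endpoint pair, and the same scaling bound applies. Summing along any path in $H\oslash G$ from $u$ to $v$ then gives a walk in $H$ from $u$ to $v$ of length $\le$ (the $H\oslash G$-length of the path), hence $d_{\len_H}(u,v) \le d_{H\oslash G}(u,v)$.

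\textbf{Main obstacle.} The only genuinely delicate point is verifying the per-edge inequality $d_{\len_H}(\pi(x),\pi(y)) \le \len_{H\oslash G}(x,y)$ for edges interior to a copy of $G$, since there the two projected images can differ and one must rule out that a single short edge of the scaled $G$ ``cheats'' the distance $\len_H(e)$ in $H$. This is handled by noting that within the copy of $G$ for $e=(a,b)$, the two distinguished boundary vertices correspond to $s(G),t(G)$, so $a$ and $b$ are at scaled-$G$-distance exactly $\len_H(e)$ from each other inside that copy — meaning no single interior edge can be shorter than the drop it induces in $\pi$, and along a full path the triangle inequality in $H$ closes the gap. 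Once this is in place the two inequalities combine to give $d_{H\oslash G}(u,v) = d_{\len_H}(u,v)$ for all $u,v \in V(H)$, which is exactly the assertion that $V(H)$ is an isometric copy of $H$ inside $H \oslash G$.
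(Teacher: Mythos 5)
Your upper bound is fine, but the lower bound has a genuine gap: the per-edge claim $d_{\len_H}(\pi(x),\pi(y)) \le \len_{H\oslash G}(x,y)$ is simply false for the projection you define, and in fact no vertex-by-vertex projection of the interior vertices onto $\{a,b\}$ can satisfy it. Concretely, take $H$ to be a single edge $e=(a,b)$ of length $1$ and $G$ a path of $L$ unit edges from $s(G)$ to $t(G)$; then $H\oslash G$ is a path of $L$ edges each of scaled length $1/L$. With your choice $\pi(e,w)=a$, the edge $((e,w),b)$ with $(w,t(G))\in E(G)$ has length $1/L$, yet its endpoints project to $a$ and $b$ at $H$-distance $1$. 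More generally, any assignment of the interior vertices to $\{a,b\}$ forces some edge of the copy whose two endpoints receive different labels, and that edge can be arbitrarily short compared with $d_{\len_H}(a,b)$; so the ``main obstacle'' paragraph asserts exactly the step that fails, and the fact that $a,b$ are at scaled distance $\len_H(e)$ inside the copy does not rescue a single crossing edge.

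The statement itself is of course true (the paper records it without proof), and the standard repair avoids per-edge Lipschitzness altogether: given a path $P$ in $H\oslash G$ between $u,v\in V(H)$, split it at its visits to $V(H)$. Each maximal subpath between consecutive $V(H)$-vertices stays inside a single copy $G_e$ (edges only join vertices $(e,\cdot)$ of the same copy or attach them to the endpoints of $e$), and if it joins the two distinct endpoints of $e$ its length is at least the scaled $s(G)$--$t(G)$ distance in that copy, which equals $\len_H(e)\ge d_{\len_H}$ of those endpoints; subpaths returning to the same endpoint contribute nonnegatively. Summing and applying the triangle inequality in $(V(H),d_{\len_H})$ gives $\len(P)\ge d_{\len_H}(u,v)$, which together with your upper bound yields the isometry. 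You should restate your lower-bound step in this path-decomposition form rather than via a projection map.
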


\subsection{A stretched $\vec G$}
\label{sec:vecG}


Let $G=(V,E)$ be an unweighted graph, and put $D = \diam(G)$.
We define a metric $s$-$t$ graph $\vec G$ which has $D+1$
layers isomorphic to $G$, with edges between the layers,
and a pair of endpoints $s,t$.  Formally,
\begin{eqnarray*}
V(\vec G) &=& \{ s,t \} \cup \{ v^{(i)}: v \in V, i \in [D+1]\} \\
E(\vec G) &=& \{ (s, v^{(1)}), (v^{(D+1)}, t) : v \in V \} \\
&&\qquad\cup
\left\{ \vphantom{\bigoplus} (u^{(i)}, v^{(i+1)}), (u^{(j)}, v^{(j)}) :  (u,v) \in E, i \in [D],
j \in [D+1] \right\} \\
&& \qquad\cup\,\, \{ (v^{(i)}, v^{(i+1)}) : v \in V, i \in [D] \}.
\end{eqnarray*}
We put $\len(s,v^{(1)})\!=\!\len(v^{(D+1)},t)\!=\!D$ for $v\! \in\! V$,
$\len(u^{(i)},v^{(i+1)})\! =\! \len(u^{(j)},v^{(j)})\! = 1$ for $(u,v) \in E$, $i \in [D]$, $j \in [D+1]$ and
$\len(v^{(i)},v^{(i+1)})=1$ for $v \in V, i \in [D]$.
We refer to edges of the form $(u^{(i)}, v^{(i)})$ as {\em vertical edges.}
All other edges are called {\em horizontal edges.}
In particular, there are $D+1$ copies $G^{(1)},\ldots,G^{(D+1)}$ of $G$ in $\vec G$ which
are isometric to $G$ itself, and their edges are all vertical.

\medskip
\noindent
{\bf A doubling version, following Laakso.}
Let $\vec G$ be a stretched graph as in Section \ref{sec:vecG}, with $D=\diam(G)$,
and let $s' = s(\vec G), t' = t(\vec G)$. Consider a new metric $s$-$t$ graph $\widetilde{G}$,
which has two new vertices $s,t$ and two new edges
$(s,s'),(t',t)$ with $\len(s,s')=\len(t',t)=3D$.

\begin{claim} \label{claim:doublingbound}
For any graph $G$ with $|V(G)| = m$, and any $k \in \mathbb N$, we have
$\log \lambda(\widetilde{G}^{\oslash k}) \lesssim \log m$.
\end{claim}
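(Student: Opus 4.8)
The goal is to bound the doubling constant of $\widetilde{G}^{\oslash k}$ by $m^{O(1)}$. The plan is to prove a single-step recursive estimate: if $H$ is an $s$-$t$ metric graph, then $\lambda(H \oslash \widetilde{G}) \lesssim \lambda(H) \cdot \mathrm{poly}(m)$ is \emph{not} quite what we want (that would blow up with $k$); instead, following Laakso--Lang, one shows that $\lambda(\widetilde{G}^{\oslash k})$ is bounded by a constant depending only on $m$, uniformly in $k$. The key structural feature that makes this possible is the \emph{tails of length $3D$}: when we substitute a copy of $\widetilde{G}$ for an edge $e$ of length $\ell$ in $H$, the ``interesting'' part of the copy (the stretched graph $\vec G$, which has diameter roughly $7D$ after accounting for the two tails of length $3D$ and the internal diameter $\le D$) sits at scales comparable to $\ell$, while the two tails of relative length $3D / (7D) = 3/7$ of the edge ensure that the copy-of-$\vec G$ part only appears at a \emph{single} dyadic scale from the point of view of any ball. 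Concretely, I would establish: (i) within the scale $(\ell, \ell/\mathrm{const})$, a ball of that radius in $\widetilde{G}^{\oslash k}$ meets only $O(m^{O(1)})$ copies of $\vec G$ coming from the relevant level of the recursion, because the tails isolate them; (ii) each such copy of $\vec G$ has $|V(\vec G)| = (D+1)m + 2 \le m^{O(1)}$ vertices (using $D = \diam(G) \le m$), so covering a ball at that scale needs at most $m^{O(1)}$ sub-balls; (iii) at all other scales, the ball looks like a ball in a simpler graph (either a sub-structure that is itself of the form $\widetilde{G}^{\oslash k'}$, handled inductively, or a union of a bounded number of intervals/tails, which are $1$-dimensional and hence $O(1)$-doubling).

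The main work is step (i): quantifying how many copies of $\vec G$ at a given recursion level a ball of radius $r$ can intersect. Here I would use that $\vec G$ has length (i.e. $s$-$t$ distance) exactly $D$ — wait, more precisely $\widetilde{G}$ has $s$-$t$ length $D + 3D + 3D = 7D$ while its total diameter is $O(D)$ — so the ratio of diameter to $s$-$t$ length is an absolute constant. Combined with the fact that in $H \oslash \widetilde{G}$ the copies substituted for distinct edges of $H$ are attached only at the vertices $V(H)$, a ball of radius $\approx \ell$ (the length of the substituted copy) centered anywhere can reach into at most $\lambda(H) \cdot O(1)$ such copies via the path metric of $H$; iterating and using that at the scale where a given level's copies have ``size $\approx r$'' the \emph{coarser} levels contribute only a bounded-degree tree-like structure, one gets a bound independent of $k$. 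The standard Laakso trick is that because the tails have definite relative length, the metric $\widetilde{G}^{\oslash k}$ is a ``Laakso-type'' space and the doubling constant telescopes rather than multiplies.

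I would organize the formal proof as an induction on $k$: assume $\lambda(\widetilde{G}^{\oslash (k-1)}) \le \Lambda$ for a constant $\Lambda = \Lambda(m)$ to be determined, and show $\lambda(\widetilde{G}^{\oslash k}) \le \Lambda$ as well (same $\Lambda$, not $\Lambda$ times something), by a case analysis on the radius $r$ of the ball $B(x,r)$ to be covered: (a) if $r$ is at least the length of a top-level substituted copy of $\widetilde{G}$, then $B(x,r)$ is essentially a ball in a rescaled $\widetilde{G}^{\oslash (k-1)}$ (using Observation \ref{obs2} and associativity), so the inductive hypothesis applies directly; (b) if $r$ is comparable to that length (within an absolute-constant factor window determined by the $3D$ tails), then $B(x,r)$ meets $O(m^{O(1)})$ copies of $\vec G$, each contributing $|V(\vec G)| \le m^{O(1)}$ points, giving a crude but sufficient bound $m^{O(1)}$ on the covering number; (c) if $r$ is smaller, then within each copy of $\widetilde{G}$ met by the ball we are again looking at a ball of the form handled by induction, or at the interior structure of a single $\vec G$, whose doubling constant is trivially $\le |V(\vec G)|$. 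The hard part will be choosing the scale windows in (b) so that they are genuinely $O(1)$-wide multiplicatively — this is exactly where the $3D$-length tails of $\widetilde G$ (as opposed to $\vec G$ alone) are used — and verifying the geometric claim that a single ball cannot straddle two different recursion levels' worth of $\vec G$-copies at once; once that separation of scales is in hand, everything else is bookkeeping with $D \le m$ and $|V(\vec G)| = O(Dm) = O(m^2)$, yielding $\log \lambda(\widetilde{G}^{\oslash k}) = O(\log m)$.
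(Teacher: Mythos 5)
Your general strategy is right: the $3D$ tails on $\widetilde G$ are exactly what gives the scale separation needed for a Laakso-type argument, and one should prove the doubling bound by an induction on $k$ whose step does not multiply the bound. However, your case (a) has a real gap that would unravel the induction. You propose, for $r$ above the finest scale, to view $B(x,r)$ as ``essentially a ball in $\widetilde{G}^{\oslash(k-1)}$'' via Observation \ref{obs2}; but $V(\widetilde{G}^{\oslash(k-1)})$ sits inside $V(\widetilde{G}^{\oslash k})$ only as a net, not as a subgraph that contains your ball. To cover the genuinely new (fine-level) vertices of $B(x,r)$ using the coarse cover, you must enlarge the cover radii by the net mesh, which forces you to start the coarse cover at radius strictly less than $r/2$; closing that gap costs an extra iteration of the doubling property, so the step yields $\lambda(\widetilde{G}^{\oslash k})\le \Lambda^{O(1)}$ rather than $\le\Lambda$, and this loss compounds to something exponential in $k$. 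The paper avoids this entirely by recursing in the \emph{other} direction: it passes to the rescaled sub-copy $H_e\cong\widetilde{G}^{\oslash(k-1)}$ that \emph{contains} $B(x,r)$ (whenever one does), which is an isometric embedding up to scaling and hence lossless; the only non-recursive case is when the ball straddles several $H_e$'s, and then one counts directly: there are $|E(\widetilde G)|$ sub-copies, and the intersection of the ball with each $H_{e'}$ is a union of at most two balls centered at that copy's $s$ or $t$, which an ``endpoint lemma'' (using the tails) covers by $|E(\widetilde G)||V(\widetilde G)|$ balls. The resulting bound $2|V(\widetilde G)||E(\widetilde G)|^2 = m^{O(1)}$ is uniform in $k$ because the recursion is a max, not a product.

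Two smaller points. First, $\len(\vec G)=3D$ (two end-edges of length $D$ plus $D$ unit layers), so $\len(\widetilde G)=3D+3D+3D=9D$, not $7D$; this matters for the constants in the scale windows. Second, in your case (c) you appeal to ``a ball of the form handled by induction'' inside a fine copy of $\widetilde G$, but those fine copies are at recursion depth $1$, not $k-1$, so this appeal doesn't align with your induction on $k$; again, the fix is to recurse on the $H_e$ that contains the ball (depth $k-1$) rather than on the finest-level copies.
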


The proof of the claim is similar to \cite{Laakso,Lang}, and follows from the following three results.

\medskip

We define
$\tri(G) = \max_{v\in V(G)} (d_{\len}(s,v)+d_{\len}(v,t))$. For any graph $G$, we have $\len(\widetilde G)=d(s,t)=9D$, and it is not hard to verify that $\tri(\widetilde{G}^{\oslash k})\leq \len(\widetilde{G}^{\oslash k})(1+{1 \over 9D-1 })$. For convenience,
let $G_0$ be the top-level copy of $\widetilde G$ in $\widetilde{G}^{\oslash k}$, and $H$ be the graph $\widetilde{G}^{\oslash k-1}$.
Then for any $e \in E(G_0)$, we refer to the copy of $H$ along edge $e$ as $H_e$.

\begin{observation} \label{obs:larger}
If $r > {\tri(\widetilde{G}^{\oslash k})\over 3}$, then the ball $B(x,r)$ in $\widetilde{G}^{\oslash k}$ may be covered by at most $|V(\widetilde{G})|$ balls of radius $r/2$.
\end{observation}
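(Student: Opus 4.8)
The plan is to use the fact that $\widetilde{G}^{\oslash k}$ decomposes, at the top level, into the single copy $G_0$ of $\widetilde{G}$ together with the copies $H_e$ of $H = \widetilde{G}^{\oslash k-1}$ hanging along the edges $e \in E(G_0)$. The key geometric point is that if $r$ is a bit larger than $\tri(\widetilde{G}^{\oslash k})/3$, then a ball of radius $r$ is so large relative to the ``interior'' structure of any single copy $H_e$ that it cannot miss much of that copy: from any basepoint $x$, within distance $\le r/2$ one already reaches one of the two endpoints $s(H_e), t(H_e)$ of every $H_e$ the ball meets, and conversely every point of such an $H_e$ lies within $r/2$ of one of those two endpoints. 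Since all the endpoints $\{s(H_e), t(H_e) : e \in E(G_0)\}$ together with whatever top-level vertices of $G_0$ are present form a set isometric to a subset of $V(\widetilde{G})$, one can center a radius-$r/2$ ball at each and cover $B(x,r)$.

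In more detail, first I would set $T = \tri(\widetilde{G}^{\oslash k})$ and recall that $T \le \len(\widetilde{G}^{\oslash k})(1 + \frac{1}{9D-1})$, so $T$ and $\len(\widetilde{G}^{\oslash k})$ agree up to a tiny factor; in particular $r > T/3$ forces $r$ to be a definite fraction of the total length. Next, for each edge $e \in E(G_0)$, the copy $H_e$ is a scaled copy of $\widetilde{G}^{\oslash k-1}$ attached only at its two endpoints $s(H_e), t(H_e)$, which are vertices of $G_0$; by the $\oslash$-scaling convention its ``diameter through the middle'' is controlled: any internal vertex $w$ of $H_e$ satisfies $d(s(H_e),w) + d(w,t(H_e)) \le \frac{\len_{G_0}(e)}{\len(\widetilde{G})}\tri(\widetilde{G})$. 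Summed over the structure this is exactly the quantity bounded by $T$. The crucial inequality is that $\min\{d(x,s(H_e)), d(x,t(H_e))\} + (\text{max internal detour in } H_e) \le r/2$ whenever $B(x,r)$ meets $H_e$: indeed if $B(x,r)$ meets $H_e$ then $x$ is within $r$ of some point of $H_e$, hence within $r + (\text{detour}) $ of an endpoint, and one checks using $r > T/3$ — equivalently $3r > T \ge$ the total internal span — that this lets us absorb everything into $r/2$. Here is where I would be careful with the constants; the statement is tuned so that $r > T/3$ (not $T/2$ or $T$) is exactly what makes the two copies of $r/2$ suffice, via a bound of the shape $3 \cdot (r/2) \ge r/2 + r = r/2 + (\text{reach to endpoint}) + (\text{span of }H_e)$, using that the endpoint-reach plus the span is at most $T/3 < r$.

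Then I would finish as follows: let $N = \{s(H_e), t(H_e) : e \in E(G_0)\} \cup (V(G_0)\cap B(x,r))$. This is a subset of $V(G_0) \cong V(\widetilde{G})$ (Observation~\ref{obs2}), so $|N| \le |V(\widetilde{G})|$. I claim $B(x,r) \subseteq \bigcup_{y \in N} B(y, r/2)$. Any point $z \in B(x,r)$ either lies in $G_0$ itself, in which case $z \in N$ trivially, or lies in the interior of some $H_e$ with $B(x,r) \cap H_e \neq \emptyset$; by the inequality above, $z$ is within $r/2$ of whichever of $s(H_e), t(H_e)$ is closer to $x$, and that endpoint is in $N$. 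Since $|V(G_0)| = |V(\widetilde{G})|$, this gives the claimed cover by at most $|V(\widetilde{G})|$ balls of radius $r/2$.

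The main obstacle is purely bookkeeping with the scaling factors introduced by $\oslash$: one must verify that ``span of $H_e$ plus reach to its nearer endpoint'' is genuinely at most $T/3$ in all cases, in particular when $\len_{G_0}(e)$ is the largest edge length in $G_0$ (the tails of length $3D$), and that the definition $\tri(\widetilde{G}^{\oslash k}) = \max_v (d(s,v)+d(v,t))$ correctly upper-bounds every internal two-endpoint span at every level. Once the constant $3$ is matched to the $\tri$-bound $T \le \len(1+\tfrac{1}{9D-1})$, the covering argument is immediate; there is no subtlety beyond that threshold calculation.
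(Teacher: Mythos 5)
Your high-level strategy — cover $B(x,r)$ by balls of radius $r/2$ centered at the vertices of the top-level copy $G_0 \cong \widetilde{G}$ — is the same as the paper's, and it does work. But your ``crucial inequality'' is false as stated: if $B(x,r)$ only barely meets $H_e$, then $\min\{d(x,s(H_e)), d(x,t(H_e))\}$ can be nearly $r$, already exceeding $r/2$. The basepoint $x$ should not appear here at all. The correct fact is about an arbitrary point $z \in H_e$ and makes no reference to the ball: $d(z, s(H_e)) + d(z, t(H_e)) \leq \frac{\len_{G_0}(e)}{\len(H)}\tri(H)$, where $H = \widetilde{G}^{\oslash k-1}$ (not $\widetilde{G}$ as you wrote --- you need the $\tri$ of the inner graph, which in general strictly exceeds its $\len$, whereas $\tri(\widetilde{G}) = \len(\widetilde{G})$). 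Hence $\min\{d(z,s(H_e)), d(z,t(H_e))\} \leq \tfrac{1}{2}\cdot\frac{\len_{G_0}(e)}{\len(H)}\tri(H)$. Once this is below $r/2$, the $r/2$-balls around $V(G_0)$ cover \emph{all} of $\widetilde{G}^{\oslash k}$, so there is no need to track which $H_e$'s the ball meets or to relate $z$ to the endpoint ``closer to $x$''.

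To close, you still need $\frac{\len_{G_0}(e)}{\len(H)}\tri(H) < r$, and this follows from two simple facts that your writeup gestures at but never pins down: (i) $\frac{\len_{G_0}(e)}{\len(H)} \leq \frac{1}{3}$, since the longest edge of $\widetilde{G}$ is a tail of length $3D$ while $\len(\widetilde{G}) = 9D$, and $G_0$ and $H$ share the same $s$-$t$ scale inside $\widetilde{G}^{\oslash k}$; and (ii) $\tri(H) \leq \tri(\widetilde{G}^{\oslash k})$, because by Observation~\ref{obs2} the copy $H = \widetilde{G}^{\oslash k-1}$ sits inside $\widetilde{G}^{\oslash k}$ isometrically with the same $s$ and $t$. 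Combining, $\frac{\len_{G_0}(e)}{\len(H)}\tri(H) \leq \frac{1}{3}\tri(\widetilde{G}^{\oslash k}) < r$. Note that the bound $\tri(\widetilde{G}^{\oslash k}) \leq \len(\widetilde{G}^{\oslash k})(1+\frac{1}{9D-1})$ you repeatedly invoke points in the wrong direction for this step and is not what matches the constant $3$; monotonicity (ii) is the clean ingredient.
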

\begin{proof}
For any $e \in E(G_0)$, we have $r > {\len(e) \over \len(H)} \tri(H)$, so every point in $H_{e}$ is less than $r/2$ from an endpoint of $e$. Thus all of $\widetilde{G}^{\oslash k}$ is covered by placing balls of radius ${\tri(\widetilde{G}^{\oslash k})\over 6}$ around each vertex of $G_0$.
\end{proof}

\begin{lemma} \label{lemma:endpoints}
If $s \in B(x,r)$, then one can cover the ball $B(x,r)$ in $\widetilde{G}^{\oslash k}$ with at most $|E(\widetilde{G})| |V(\widetilde{G})|$ balls of radius $r/2$.
\end{lemma}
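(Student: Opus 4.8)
The plan is to prove Lemma~\ref{lemma:endpoints} by induction on $k$. The base case $k=0$ is immediate: $\widetilde G^{\oslash 0}$ is a unit interval, $B(x,r)$ is a subinterval of length less than $2r$, and three balls of radius $r/2$ suffice (and $|E(\widetilde G)|\,|V(\widetilde G)|\ge 3$). For the inductive step I would write $\widetilde G^{\oslash k}=G_0\oslash H$ with $G_0\cong\widetilde G$ and $H=\widetilde G^{\oslash k-1}$, and exploit the geometry near $s$: in $G_0$ the vertex $s$ has degree one, its unique incident edge being a ``tail'' $e^*=(s,s')$, and since $s$ has degree one the copy $H_{e^*}$ meets the rest of $\widetilde G^{\oslash k}$ only at $s'$. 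Setting $\ell:=\len(H_{e^*})=d(s,s')$, this forces $d(s,y)\ge\ell$ for every $y\notin H_{e^*}$, while every edge of $G_0$ other than its two tails has length at most $\ell/3$. Throughout I would also use the near-geodesicity bound $\tri(H_e)\le(1+\tfrac1{9D-1})\,\len_{G_0}(e)=:\rho\,\len_{G_0}(e)$ (the inequality already invoked just before Observation~\ref{obs:larger}, applied to the scaled copy $H_e\cong\widetilde G^{\oslash k-1}$), noting $\rho\le\tfrac98$ because $D\ge1$.

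The argument then splits on the size of $r$. If $r\le\ell/2$, then every $y\in B(x,r)$ has $d(s,y)\le d(s,x)+d(x,y)<2r\le\ell$, so $B(x,r)\subseteq H_{e^*}$; since $H_{e^*}$ is isometrically embedded and $s=s(H_{e^*})\in B(x,r)$, the inductive hypothesis applied to $H_{e^*}$ (the statement being scale-invariant) covers $B(x,r)$ by at most $|E(\widetilde G)|\,|V(\widetilde G)|$ balls of radius $r/2$. If instead $r>\tri(\widetilde G^{\oslash k})/3$, then Observation~\ref{obs:larger} already yields a cover by $|V(\widetilde G)|$ balls of radius $r/2$. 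This leaves the intermediate range $\ell/2<r\le\tri(\widetilde G^{\oslash k})/3\,(\le\rho\ell)$, which is the heart of the matter.

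For the intermediate range I would cover $B(x,r)$ slice by slice. Since $\bigcup_{e\in E(G_0)}H_e$ is all of $\widetilde G^{\oslash k}$, it suffices to cover each $Z_e:=B(x,r)\cap H_e$, using only Observation~\ref{obs:larger} one level down (on $H_e$) together with a grouping trick. When $x\in H_e$ one has $Z_e=B_{H_e}(x,r)$ and $\tri(H_e)/3\le\rho\ell/3\le\tfrac38\ell<\tfrac12\ell<r$, so Observation~\ref{obs:larger} on $H_e$ covers $Z_e$ with $\le|V(\widetilde G)|$ balls of radius $r/2$. When $x\notin H_e$, every geodesic from $x$ into $H_e$ enters through $s(H_e)$ or $t(H_e)$, so $Z_e\subseteq B_{H_e}(s(H_e),\rho_s)\cup B_{H_e}(t(H_e),\rho_t)$ with $\rho_w:=(r-d(x,w))_+$. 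A piece with $\rho_w\le r/2$ sits inside the single ball $B_{\widetilde G^{\oslash k}}(w,r/2)$, so collecting these ``grazing'' pieces over all $e$ sharing the vertex $w$ costs just one ball per vertex of $G_0$, i.e.\ $\le|V(\widetilde G)|$ balls in total. A piece with $\rho_w>r/2$ forces $d(x,w)<r/2$; a short case analysis --- using the degree-one property of the tails (e.g.\ if $e=e^*$ and $w=s'$ then $d(x,s)=d(x,s')+\ell$, whence $\rho_{s'}>\ell$) together with $\len_{G_0}(e)\le\ell/3$ for non-tail edges --- shows $\rho_w>\tri(H_e)/3$, so Observation~\ref{obs:larger} on $H_e$ covers the piece with $\le|V(\widetilde G)|$ balls of radius $\rho_w/2\le r/2$. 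Summing over the $|E(\widetilde G)|$ slices gives a cover by $O(|E(\widetilde G)|\,|V(\widetilde G)|)$ balls of radius $r/2$; adjusting the constant (or tightening the count in this last step) yields the stated bound, which then feeds into the proof of Claim~\ref{claim:doublingbound}.

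The step I expect to be the main obstacle is precisely this intermediate range: there the ball is too big to be trapped inside $H_{e^*}$, so the clean recursion of the small-radius case is unavailable, yet too small for the crude covering of Observation~\ref{obs:larger}. One must verify that every slice $Z_e$ is of one of the two manageable types --- a ``large'' ball covered by Observation~\ref{obs:larger} on $H_e$, or a ``small'' ball around a vertex of $G_0$ absorbed into a single half-radius ball there --- and the two long tails of $G_0$ need to be handled by hand. It is exactly the near-geodesicity estimate $\tri\le(1+\tfrac1{9D-1})\len$ (and the resulting $\rho\le\tfrac98$) that makes the factor-two bookkeeping for the radius come out the right way.
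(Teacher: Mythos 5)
Your proof is essentially correct (up to a constant factor that you acknowledge and that is harmless for Claim~\ref{claim:doublingbound}), but it is substantially more convoluted than the paper's, and the reason is a small misdiagnosis: the case $\ell/2<r\le\tri(\widetilde G^{\oslash k})/3$ that you single out as ``the heart of the matter'' is in fact not a separate case at all. The paper splits only once, at $r$ versus $\len(\widetilde G^{\oslash k})/6$. In the range $r>\len(\widetilde G^{\oslash k})/6$ (which is exactly your $r>\ell/2$, so it subsumes both your ``intermediate'' and ``large'' cases), one checks that $r>\tri(H_e)/3$ for \emph{every} edge $e$ of $G_0$ simultaneously: since the longest edge of $G_0$ is a tail with $\len(H_e)\le\ell=\len(\widetilde G^{\oslash k})/3$ and $\tri(H_e)\le(1+\frac{1}{9D-1})\len(H_e)\le\frac98\ell$, one gets $\tri(H_e)/3\le\frac38\ell<\frac12\ell<r$. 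This is literally the chain of inequalities you wrote down, but you applied it only to the edges containing $x$; it holds for all $e$, with no dependence on the location of $x$. Applying Observation~\ref{obs:larger} to each of the $|E(\widetilde G)|$ copies $H_e$ then covers the entire graph $\widetilde G^{\oslash k}$ (not merely $B(x,r)$) by $|E(\widetilde G)|\,|V(\widetilde G)|$ balls of radius $r/2$, and the proof is finished in one line.

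Your slice-by-slice analysis in the intermediate range --- decomposing $Z_e=B(x,r)\cap H_e$ into ``grazing'' pieces absorbed into single half-radius balls around vertices of $G_0$ and ``large'' pieces handled by Observation~\ref{obs:larger} on $H_e$ --- is the kind of bookkeeping that the paper reserves for Lemma~\ref{lem:maindoublinglem}, where it is genuinely needed because there is no vertex of $G_0$ known to lie in the ball. Here that machinery is unnecessary, and as you note it overshoots the stated count to $O(|E(\widetilde G)|\,|V(\widetilde G)|)$; the uniform application of Observation~\ref{obs:larger} gives the exact bound cleanly. Your small-radius case ($r\le\ell/2$, forcing $B(x,r)\subseteq H_{e^*}$ and recursing) matches the paper's second case.
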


\begin{proof}
First consider the case in which $r > {\len(\widetilde{G}^{\oslash k})\over 6}$. Then for any edge $e$ in $\widetilde{G}^{\oslash k}$, we have $r > {\len(e)\over \len(H)} \cdot \frac{\tri(H)}{3}$. Thus by Observation \ref{obs:larger}, we may cover $H_e$ by $|V(\widetilde{G})|$ balls of radius $r/2$. This gives a covering of all of $\widetilde{G}^{\oslash k}$ by at most $|E(\widetilde{G})| |V(\widetilde{G})|$ balls of radius $r/2$.

Otherwise, assume ${\len(\widetilde{G}^{\oslash k})\over 6} \geq r$. Since $s \in B(x,r)$, but $2r \leq \frac{\len(\widetilde{G}^{\oslash k})}{3}$, the ball must be completely contained inside $H_{(s,s')}$. By induction, we can find a sufficient cover of this smaller graph.
\end{proof}

\begin{lemma} \label{lem:maindoublinglem}
We can cover any ball $B(x,r)$ in $\widetilde{G}^{\oslash k}$ with at most $2 |V(\widetilde{G})|  |E(\widetilde{G})|^2$ balls of radius $r/2$.
\end{lemma}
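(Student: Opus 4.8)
The plan is to prove the lemma by strong induction on $k$, splitting on whether the ball $B(x,r)$ meets the vertex set $V(G_0)$ of the top-level copy $G_0\cong\widetilde{G}$ inside $\widetilde{G}^{\oslash k}$. I will use the following structural facts, all immediate from the $\oslash$-construction and the length normalization (see Observation \ref{obs2}): the $|E(G_0)|=|E(\widetilde{G})|$ first-level sub-copies $H_e$ ($e\in E(G_0)$) together cover $\widetilde{G}^{\oslash k}$; each $H_e$ is a rescaled isometric copy of $\widetilde{G}^{\oslash k-1}$; $H_e$ is attached to the rest of $\widetilde{G}^{\oslash k}$ only at the two endpoints of $e$; and those endpoints lie at distance exactly $\len(e)$ with no competing route, since every edge of $\widetilde{G}$ is its own unique shortest path and $V(G_0)$ embeds isometrically. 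Consequently $d_{\widetilde{G}^{\oslash k}}$ and $d_{H_e}$ agree on pairs of points of $H_e$, and a shortest path from a point outside $H_e$ to a point of $H_e$ must enter $H_e$ at one of the two endpoints of $e$. The base case $k=0$ is trivial, since $\widetilde{G}^{\oslash 0}$ is a unit interval and any ball in it is covered by two balls of half the radius.

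Case 1 ($B(x,r)\cap V(G_0)=\emptyset$): then $x$ lies in the interior of a unique sub-copy $H_{e_0}$, and since $B(x,r)$ cannot reach the endpoints of $e_0$ it is entirely contained in $H_{e_0}$; by the remarks above it is literally a ball of the metric space $H_{e_0}$. Applying the inductive hypothesis to $H_{e_0}\cong\widetilde{G}^{\oslash k-1}$ covers it by at most $2|V(\widetilde{G})||E(\widetilde{G})|^2$ balls of radius $r/2$ in $H_{e_0}$, and each of these is contained in the concentric ball of equal radius in $\widetilde{G}^{\oslash k}$, which settles this case.

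Case 2 ($B(x,r)\cap V(G_0)\neq\emptyset$): write $B(x,r)=\bigcup_e\bigl(B(x,r)\cap H_e\bigr)$ over the at most $|E(\widetilde{G})|$ sub-copies $H_e$ that $B(x,r)$ meets, and bound each trace separately. I claim every trace $B(x,r)\cap H_e$ lies inside at most two balls of $H_e$, each of radius $r$ and centered at an endpoint of $H_e$ (that is, at $s(H_e)$ or $t(H_e)$). If $x\notin H_e$, then each $z\in B(x,r)\cap H_e$ is reached from $x$ through one of the two endpoints $w$ of $e$, so $d_{H_e}(w,z)\le d(x,z)<r$. If $x\in H_e$, then $B(x,r)\cap H_e=B_{H_e}(x,r)$, and this ball already contains an endpoint $w$ of $H_e$: by the case hypothesis some vertex of $V(G_0)$ is within distance $r$ of $x$, and a shortest path to it leaves $H_e$ through such a $w$, so $d(x,w)<r$ — hence Lemma \ref{lemma:endpoints} applies to $B_{H_e}(x,r)$ directly. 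In every sub-case, Lemma \ref{lemma:endpoints} (in its stated $s$-form, or its $t$-form, which follows from the reversal symmetry $s\leftrightarrow t$ of $\widetilde{G}$), applied to the appropriately rescaled $\widetilde{G}^{\oslash k-1}$, covers a radius-$r$ ball centered at an endpoint of $H_e$ by at most $|E(\widetilde{G})||V(\widetilde{G})|$ balls of radius $r/2$; since there are at most two such endpoint balls, the trace is covered by at most $2|E(\widetilde{G})||V(\widetilde{G})|$ balls of radius $r/2$. Summing over the at most $|E(\widetilde{G})|$ relevant sub-copies yields the bound $2|V(\widetilde{G})||E(\widetilde{G})|^2$.

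The step I expect to be the crux is the radius bookkeeping in Case 2. A naive triangle inequality only shows $B(x,r)\cap H_e\subseteq B_{H_e}(w,2r)$, and feeding a radius-$2r$ ball into Lemma \ref{lemma:endpoints} would produce balls of radius $r$, not $r/2$, which is useless for bounding the doubling constant. The remedy is exactly the decomposition of a shortest path at its entry endpoint into $H_e$, which gives $d_{H_e}(w,z)\le d(x,z)<r$; this is where the rigidity of $\widetilde{G}$ (no shortcut across any edge) and the length normalization of the $\oslash$-product are used, and it is essentially the reason the construction uses a stretched graph with long tails. The factor $2$ in the final bound is genuine: a ball may wrap around a sub-copy and enter it through both of its endpoints. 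If one prefers, the regime $r>\tri(\widetilde{G}^{\oslash k})/3$ can be disposed of first via Observation \ref{obs:larger}, but the induction above closes without it.
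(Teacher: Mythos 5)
Your proof is correct and takes essentially the same route as the paper's: induction on $k$, a case split on whether $B(x,r)$ is confined to a single sub-copy $H_e$, and covering the traces $B(x,r)\cap H_e$ by invoking Lemma~\ref{lemma:endpoints} on balls centered at the endpoints of $e$. The only cosmetic differences are that you split on $B(x,r)\cap V(G_0)=\emptyset$ rather than on containment in some $H_e$, and you use entry-point balls of radius $r$ where the paper uses the slightly tighter radius $\max(0,r-d(x,\cdot))$; neither change affects the bound or the structure of the argument.
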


\begin{proof}
We prove this lemma using induction. For $\widetilde{G}^{\oslash0}$, the claim holds trivially. Next, if any $H_{e}$ contains all of $B(x,r)$, then by induction we are done. Otherwise, for each $H_{e}$ containing $x$, $B(x,r)$ contains an endpoint of $e$. Then by Lemma \ref{lemma:endpoints}, we may cover $H_{e}$ by at most $|E(\widetilde{G})| |V(\widetilde{G})|$ balls of radius $r/2$. For all other edges $e' = (u,v)$, $x \notin H_{e'}$, so we have:
\[
V(H_{e'})\cap B(x,r)\subseteq B(v,\max(0,r-d(x,v)))\cup B(u,\max(0,r-d(x,u))).
\]
Thus, using Lemma \ref{lemma:endpoints} on both of the above balls, we may cover $V(H_{e'})\cap B(x,r)$ by at most
$2|E(\widetilde G)| |V(\widetilde G)|$ balls of radius $r/2$. Hence, in total, we need at most $2 |V(\widetilde{G})| |E(\widetilde{G})|^2$
balls of radius $r/2$ to cover all of $B(x,r)$.

\end{proof}

\begin{proof} [Proof of Claim \ref{claim:doublingbound}]
First note that $|V(\widetilde G)| = m (D + 1) + 2 \lesssim m^2$.
By Lemma \ref{lem:maindoublinglem}, we have
\[
\lambda(\widetilde{G}^{\oslash k}) \leq 2 |V(\widetilde{G})|  |E(\widetilde{G})|^2 \leq
2 |V(\widetilde{G})|^5 \lesssim m^{10}.
\]
Hence $\log \lambda(\widetilde{G}^{\oslash k}) \lesssim \log m$.
\end{proof}

\section{Lower bound}
\label{sec:LB}

For any $\pi \in \Aut(G)$, we define a corresponding automorphism $\tilde \pi$ of $\tilde G$
by $\tilde \pi(s)=s$, $\tilde \pi(t)=t$, $\tilde \pi(s')=s'$, $\tilde \pi(t')=t'$, and $\tilde \pi(v^{(i)}) = \pi(v)^{(i)}$ for $v \in V, i \in [D+1]$.

\begin{lemma}\label{lem:stretch}
Let $G$ be a vertex transitive graph.
Let $f : V(\widetilde G) \to L_2$ be an injective mapping
and define
$\bar f : V(\widetilde G) \to L_2$ by
$$
\bar f(x) = \frac{1}{\sqrt{|\Aut(G)|}}\left(f(\widetilde \pi x)\vphantom{\bigoplus}\right)_{\pi \in \Aut(G)}.
$$
Let $\beta$ be such that for every $i \in [D+1]$ there exists a vertical edge $(u^{(i)}, v^{(i)})$
with $\|\bar f(u^{(i)})-\bar f(v^{(i)})\| \geq \beta$.
Then there exists a horizontal edge $(x,y) \in E(\widetilde G)$ such that
\begin{equation}\label{eq:stretch}
\frac{\|\bar f(x)-\bar f(y)\|^2}{d_{\widetilde G}(x,y)^2} \geq \frac{\bar \|f(s)-\bar f(t)\|^2}{d_{\widetilde G}(s,t)^2} + \frac{\beta^2}{36}
\end{equation}
\end{lemma}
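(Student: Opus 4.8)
\emph{Plan.} The idea is to symmetrize, project $\bar f$ onto the line through $\bar f(s)$ and $\bar f(t)$, and then argue that the transverse displacement which the hypothesis forces inside \emph{every} layer has to be deposited onto some horizontal edge, at the cost of raising its squared stretch above that of the $s$-$t$ pair. For Step~1 I would record that $\bar f$ is $\Aut(G)$-equivariant: for $\pi\in\Aut(G)$ the vector $\bar f(\widetilde\pi x)$ is obtained from $\bar f(x)$ by permuting the $|\Aut(G)|$ coordinate blocks, so there is a unitary $U_\pi$ with $\bar f(\widetilde\pi x)=U_\pi\bar f(x)$ for all $x$, and since $\widetilde\pi$ fixes $s$ and $t$, $U_\pi$ fixes $\bar f(s)$ and $\bar f(t)$. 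Assuming $\bar f(s)\neq\bar f(t)$, put $\hat u=(\bar f(t)-\bar f(s))/\|\bar f(t)-\bar f(s)\|$, $\ell(x)=\langle\bar f(x)-\bar f(s),\hat u\rangle$, and $h(x)=\bar f(x)-\bar f(s)-\ell(x)\hat u$, so $\|\bar f(x)-\bar f(y)\|^2=(\ell(x)-\ell(y))^2+\|h(x)-h(y)\|^2$. Because $U_\pi$ fixes $\bar f(s)$ and $\hat u$, the scalar $\ell$ is $\Aut(G)$-invariant, and vertex-transitivity of $G$ then forces $\ell(v^{(i)})=\ell_i$ for a number $\ell_i$ independent of $v$. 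In particular a stretched vertical edge $(a^{(i)},b^{(i)})$ in layer $i$ has $\ell(a^{(i)})=\ell(b^{(i)})$, so its whole length is transverse: $\|h(a^{(i)})-h(b^{(i)})\|=\|\bar f(a^{(i)})-\bar f(b^{(i)})\|\ge\beta$.

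\emph{Step 2 (pushing $\beta$ onto a unit horizontal edge).} Fix $i\in\{1,\dots,D\}$ and a stretched vertical edge $(a^{(i)},b^{(i)})$ with $(a,b)\in E(G)$. Both $(a^{(i)},a^{(i+1)})$ and $(b^{(i)},a^{(i+1)})$ are horizontal edges of $\widetilde G$ of length $1$ (the latter exists since $(b,a)\in E$), and each has longitudinal jump exactly $\delta_i:=\ell_{i+1}-\ell_i$. From $\|h(a^{(i)})-h(b^{(i)})\|\le\|h(a^{(i)})-h(a^{(i+1)})\|+\|h(a^{(i+1)})-h(b^{(i)})\|$ one of these transverse parts is $\ge\beta/2$, so some length-$1$ horizontal edge has squared $\bar f$-length at least $\delta_i^2+\beta^2/4$. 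Using that a maximum dominates an average, together with $\sum_{i=1}^D\delta_i^2\ge\tfrac1D(\sum_i\delta_i)^2$ and $\sum_{i=1}^D\delta_i=\ell_{D+1}-\ell_1=:\Delta$, we get a horizontal edge with squared stretch at least $\tfrac{\Delta^2}{D^2}+\tfrac{\beta^2}{4}$.

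\emph{Step 3 (the long edges and the combining inequality).} The four remaining long horizontal edges $(s,s')$, $(s',v^{(1)})$, $(v^{(D+1)},t')$, $(t',t)$, of lengths $3D,D,D,3D$, have longitudinal jumps $j_1,j_2,j_3,j_4$ (independent of $v$, by Step~1) with $j_1+j_2+j_3+j_4=L-\Delta$ where $L:=\|\bar f(s)-\bar f(t)\|$, and $\|\bar f(e)\|^2\ge(\text{jump})^2$ for each. Setting $\mu=\max\big(\tfrac{|j_1|}{3},|j_2|,|j_3|,\tfrac{|j_4|}{3}\big)$ gives $\sum_k|j_k|\le8\mu$, hence $\mu\ge\tfrac18|L-\Delta|$, so some long edge has squared stretch $\tfrac{\mu^2}{D^2}\ge\tfrac{(L-\Delta)^2}{64D^2}$. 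It then suffices to verify the elementary inequality
\[
\max\!\left(\frac{\Delta^2}{D^2}+\frac{\beta^2}{4},\ \frac{(L-\Delta)^2}{64D^2}\right)\ \ge\ \frac{L^2}{81D^2}+\frac{\beta^2}{36}\qquad(\Delta\in\mathbb R),
\]
for which assuming both quantities on the left are strictly smaller forces $\Delta^2<\tfrac1{81}(L^2-18D^2\beta^2)$ and then, with $M:=\sqrt{L^2-18D^2\beta^2}$, yields the contradiction $9(L-M)^2<0$. Since $d_{\widetilde G}(s,t)=9D$, this is exactly \eqref{eq:stretch}; the degenerate case $\bar f(s)=\bar f(t)$ follows from Step~2 applied directly (triangle inequality on a stretched vertical edge and an adjacent layer).

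\emph{Main obstacle.} The conceptual point is Step~1, the $\Aut(G)$-invariance of $\ell$ — this is what makes a guaranteed-stretched vertical edge purely transverse and hence usable — but once noticed it is short. The real work is Step~3: the two tails of length $3D$ may absorb almost all of the $s$-$t$ displacement while contributing no $\beta$-term, so one must check that the remaining budget still yields the full additive gain $\beta^2/36$. The combining inequality is tight precisely when all displacement sits on the tails, which is exactly why the edge lengths $3D$ and $D$ (so that $d_{\widetilde G}(s,t)=9D$) and the constant $36$ arise.
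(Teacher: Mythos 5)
Your proof is correct and follows essentially the same route as the paper: you symmetrize over $\Aut(G)$, observe that the vertical edges are orthogonal to the $s$--$t$ direction (the paper's fact (F4)), and then combine a longitudinal-jump argument with the Pythagorean theorem to force one horizontal edge's squared stretch above $\gamma^2 + \beta^2/36$. The only difference is organizational --- the paper runs a threshold case-analysis on $\rho_0, \rho_{D+1}$ while you separately bound the unit edges and the long edges and then verify a single algebraic ``combining'' inequality --- but the underlying decomposition and the key use of orthogonality plus Pythagoras are the same.
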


\begin{proof}
Let $D = \diam(G)$.
We first observe three facts about $\bar f$, which rely
on the fact that when $\Aut(G)$ is transitive, for every $x \in V$,
the orbits $\{\pi(x)\}_{\pi \in \Aut(G)}$ all have the same cardinality.

\begin{enumerate}
\item[(F1)] $\|\bar f(s)-\bar f(t)\| = \|f(s)-f(t)\|$
\item[(F2)] For all $u,v \in V$,
\begin{eqnarray*}
\|\bar f(s)-\bar f(v^{(1)})\|&=&\|\bar f(s)-\bar f(u^{(1)})\|,\\
\|\bar f(t)-\bar f(v^{(D+1)})\|&=&\|\bar f(t)-\bar f(u^{(D+1)})\|.
\end{eqnarray*}
\item[(F3)] For every $u,v \in V$, $i\in [D]$,
$$\|\bar f(v^{(i)})-\bar f(v^{(i+1)})\|=\|\bar f(u^{(i)})-\bar f(u^{(i+1)})\|.$$
\item[(F4)] For every pair of vertices $u,v \in V$ and $i \in [D+1]$,
$$
\langle \bar f(s)-\bar f(t), \bar f(u^{(i)}) - \bar f(v^{(i)}) \rangle = 0.
$$
\end{enumerate}

Let $z = {\bar f(s)-\bar f(t)\over \| \bar f(s)-\bar f(t)\|}$.  Fix some $r \in V$ and
let $\rho_0 = |\langle z, \bar f(s)-\bar f(r^{(1)})\rangle|$,
$\rho_i =  |\langle z, \bar f(r^{(i)})-\bar f(r^{(i+1)})\rangle|$
for $i=1,2,\ldots,D$ and
$\rho_{D+1} = |\langle z, \bar f(t)-\bar f(r^{(D+1)})\rangle|$.
Note that, by (F2) and (F3) above, the values $\{\rho_i\}$ do not
depend on the representative $r \in V$.
In this case, we have
\begin{equation}\label{eq:tri}
\sum_{i=0}^{D+1} \rho_i \geq \|\bar f(s)-\bar f(t)\| = 9\gamma D,
\end{equation}
where we put $\gamma = \frac{\|\bar f(s)-\bar f(t)\|}{d_{\widetilde G}(s,t)}$.  Note that $\gamma > 0$
since $f$ is injective.

Recalling that $d_{\widetilde G}(s,t) = 9D$ and $d_{\widetilde G}(s,r^{(1)})=4D$, observe that
if $\rho_0^2 \geq \left(1+\frac{\beta^2}{36\gamma^2}\right) (4 \gamma D)^2$, then
$$
\max \left(\frac{\|\bar f(s)-\bar f(s'))\|^2}{d_{\widetilde G}(s,s')^2} ,\frac{\|\bar f(s')-\bar f(r^{(1)})\|^2}{d_{\widetilde G}(s',r^{(1)})^2}\right) \geq \gamma^2 + \frac{\beta^2}{36},
$$
verifying \eqref{eq:stretch}.  The symmetric argument holds for $\rho_{D+1}$, thus
we may assume that $$\rho_0, \rho_{D+1} \leq  4\gamma D\sqrt{1+\frac{\beta^2}{36\gamma^2}}
\leq 4\gamma D \left(1+\frac{\beta^2}{72\gamma^2}\right).
$$
In this case, by \eqref{eq:tri}, there must exist an index $j \in [D]$ such that
$$\rho_j \geq \left(1-\frac{8 \beta^2}{72 \gamma^2}\right) \gamma=\left(1-\frac{\beta^2}{9 \gamma^2}\right) \gamma.$$

Now, consider a vertical edge $(u^{(j+1)}, v^{(j+1)})$ with
$\|\bar f(u^{(j)})-\bar f(v^{(j)})\| \geq \beta$, and let $$u'=\bar f(u^{(j)})+ \langle z, \bar f(u^{(j)}) - \bar f(u^{(j+1)}) \rangle\,z\,.$$
From (F4), and the Pythagorean inequality we have,
\begin{eqnarray*}
&& \!\!\!\!\!\!\!\!\!\!\!\!\!\!\!\!\!\!\!\!\!\!\!\!\!\!\!\!\!\!\!\!\!\!\!\!\!\!\!\!\!\!\!\!\!\!\!\!\!\!\!\!\!\!\!\!\!\!\!\!
\!\!\!\!\!\!\!\!\!\!\!
\max(\|\bar f(u^{(j)})\!-\bar f(u^{(j+1)})\|^2,\|\bar f(u^{(j)})-\bar f(v^{(j+1)})\|^2) = \\
\|\bar f(u^{(j)})\!-u'\|^2 \!\!\! &+& \!\!\! \max({\|u'\!-\bar f(u^{(j+1)})\|^2,\|u'-\bar f(v^{(j+1)})\|^2)}) \\
&\geq&  \rho_j^2 + \frac{\beta^2}{4} \\
&\geq& \left(1-\frac{2 \beta^2}{9\gamma^2}\right) \gamma^2 + {\beta^2\over 4} \\
&\geq& \gamma^2 + \frac{\beta^2}{36},
\end{eqnarray*}
again verifying \eqref{eq:stretch} for one of the two edges $(u^{(j)}, v^{(j+1)})$ or $(u^{(j)}, u^{(j+1)})$.
\end{proof}

The following lemma is well-known, and follows from the variational
characterization of eigenvalues (see, e.g. \cite[Ch. 15]{Mat01}).

\begin{lemma}\label{lem:poincare}
If $G=(V,E)$ is a $d$-regular graph with second Laplacian eigenvalue $\mu_2(G)$,
then for any mapping $f : V \to L_2$, we have
\begin{equation}\label{eq:poincare}
\E_{x,y \in V} \,\|f(x)-f(y)\|^2 \lesssim \frac{d}{\mu_2(G)}\, \E_{(x,y) \in E} \,\|f(x)-f(y)\|^2
\end{equation} \
\end{lemma}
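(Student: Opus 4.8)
The statement is the classical Poincar\'e inequality for expander graphs, and the plan is the standard one: reduce to real-valued $f$, rewrite both averages as Rayleigh quotients of the Laplacian, and invoke the Courant--Fischer characterization of $\mu_2(G)$.

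First, I would note that both sides of \eqref{eq:poincare} are invariant under translating $f$ by a fixed vector of $L_2$, so we may assume $\sum_{x \in V} f(x) = 0$. Since the target is a Hilbert space, expanding $f$ in an orthonormal basis as $f = (f_j)_j$ gives $\|f(x)-f(y)\|^2 = \sum_j |f_j(x)-f_j(y)|^2$, and each side of \eqref{eq:poincare} decomposes as the corresponding sum over $j$; hence it suffices to treat $f : V \to \R$ with $\sum_x f(x) = 0$. For such an $f$, a direct expansion gives
\[
\E_{x,y \in V}|f(x)-f(y)|^2 = \frac{2}{|V|}\sum_{x\in V} f(x)^2,
\]
while, writing $L = d\cdot I - A$ for the Laplacian of the $d$-regular graph $G$ (with $A$ its adjacency matrix) and using the identity $f^\top L f = \sum_{(x,y)\in E}(f(x)-f(y))^2$,
\[
\E_{(x,y)\in E}|f(x)-f(y)|^2 = \frac{1}{|E|}\, f^\top L f.
\]

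Second, since $f$ is orthogonal to the all-ones vector, the variational characterization of the second-smallest Laplacian eigenvalue gives $f^\top L f \geq \mu_2(G)\,\|f\|_2^2 = \mu_2(G)\sum_{x} f(x)^2$. Combining this with the two identities above and with $|E| = d|V|/2$, we obtain
\begin{align*}
\E_{x,y\in V}|f(x)-f(y)|^2 &= \frac{2}{|V|}\sum_{x} f(x)^2 \;\leq\; \frac{2}{|V|\,\mu_2(G)}\, f^\top L f \\
&= \frac{d}{\mu_2(G)}\,\E_{(x,y)\in E}|f(x)-f(y)|^2,
\end{align*}
which is \eqref{eq:poincare}, in fact with constant $1$ (the $\lesssim$ absorbs any discrepancy from whether $\E_{(x,y)\in E}$ ranges over ordered or unordered edges). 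There is essentially no obstacle here, as the statement is classical: the only points deserving a moment's attention are the reduction from $L_2$-valued to scalar maps --- immediate because each side of the inequality is additive over an orthonormal decomposition of the target --- and centering $f$ so that Courant--Fischer applies to $\mu_2(G)$ rather than to the trivial eigenvalue $0$.
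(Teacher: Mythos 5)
Your proof is correct and follows exactly the route the paper has in mind: the paper gives no proof of Lemma \ref{lem:poincare}, simply citing it as well-known via the variational characterization of eigenvalues, which is precisely the Courant--Fischer argument you carry out (after the standard coordinatewise reduction and centering). Nothing is missing; your version even makes explicit that the constant is $1$ up to the edge-normalization convention.
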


The next lemma shows that when we use an expander graph, we get a significant
increase in stretch for edges of $\widetilde G$.

\begin{lemma}\label{lem:find}
Let $G=(V,E)$ be a $d$-regular vertex-transitive graph with $m=|V|$ and $\mu_2=\mu_2(G)$.
If $f : V(\widetilde G) \to L_2$ is any non-contractive mapping, then
there exists a horizontal edge $(x,y) \in E(\widetilde G)$ with
\begin{equation}
\label{eq:long}
\frac{\|f(x)-f(y)\|^2}{d_{\widetilde G}(x,y)^2} \geq \frac{\|f(s)-f(t)\|^2}{d_{\widetilde G}(s,t)^2} + \Omega\left(\frac{\mu_2}{d} (\log_d m)^2\right).
\end{equation}
\end{lemma}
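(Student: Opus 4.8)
The plan is to combine Lemma~\ref{lem:stretch} with the Poincar\'e inequality of Lemma~\ref{lem:poincare} applied to the individual copies $G^{(i)}$ of $G$ inside $\widetilde G$. First I would note that, since $f$ is non-contractive, for each layer $i \in [D+1]$ the average over pairs satisfies $\E_{x,y \in V}\|f(x^{(i)})-f(y^{(i)})\|^2 \geq \E_{x,y\in V} d_{\widetilde G}(x^{(i)},y^{(i)})^2 \gtrsim D^2 = \diam(G)^2$, because a constant fraction of the pairs $x,y$ are at distance $\Omega(D)$ in $G$ (and hence in the isometric copy $G^{(i)} \subseteq \widetilde G$). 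For a $d$-regular expander we have $D = \diam(G) \approx \log_d m$, so this lower bound is $\gtrsim (\log_d m)^2$.

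Next I would pass to the symmetrized embedding $\bar f$ of Lemma~\ref{lem:stretch}. The key point is that symmetrizing over $\Aut(G)$ can only shrink squared distances on average but preserves them in the relevant averaged sense: by vertex-transitivity, $\E_{x,y\in V}\|\bar f(x^{(i)})-\bar f(y^{(i)})\|^2 = \E_{\pi}\E_{x,y}\|f(\pi x^{(i)})-f(\pi y^{(i)})\|^2 = \E_{x,y}\|f(x^{(i)})-f(y^{(i)})\|^2$, so the layer-average is unchanged. Likewise the edge-average over vertical edges of $G^{(i)}$ is unchanged under symmetrization. Now apply Lemma~\ref{lem:poincare} to $\bar f$ restricted to $G^{(i)}$: it gives that the average squared length of a vertical edge in layer $i$ is $\gtrsim \frac{\mu_2}{d}\,\E_{x,y}\|\bar f(x^{(i)})-\bar f(y^{(i)})\|^2 \gtrsim \frac{\mu_2}{d}(\log_d m)^2$. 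In particular, for each $i$ there exists at least one vertical edge $(u^{(i)},v^{(i)})$ with $\|\bar f(u^{(i)})-\bar f(v^{(i)})\|^2 \gtrsim \frac{\mu_2}{d}(\log_d m)^2$; set $\beta^2$ to be this common lower bound, i.e. $\beta = \Omega\!\left(\sqrt{\mu_2/d}\,\log_d m\right)$.

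Then I would invoke Lemma~\ref{lem:stretch} directly: its hypothesis is exactly that every layer $i$ contains a vertical edge of $\bar f$-length at least $\beta$, which we have just established. The conclusion produces a horizontal edge $(x,y) \in E(\widetilde G)$ with
\[
\frac{\|\bar f(x)-\bar f(y)\|^2}{d_{\widetilde G}(x,y)^2} \geq \frac{\|\bar f(s)-\bar f(t)\|^2}{d_{\widetilde G}(s,t)^2} + \frac{\beta^2}{36}.
\]
Finally I would transfer this back from $\bar f$ to $f$. By (F1), $\|\bar f(s)-\bar f(t)\| = \|f(s)-f(t)\|$, so the first term on the right is exactly $\|f(s)-f(t)\|^2/d_{\widetilde G}(s,t)^2$. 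On the left, the coordinates of $\bar f$ are scaled copies of $f \circ \widetilde\pi$, and each automorphism $\widetilde\pi$ maps horizontal edges to horizontal edges of the same $\widetilde G$-length; hence $\|\bar f(x)-\bar f(y)\|^2 = \E_\pi \|f(\widetilde\pi x)-f(\widetilde\pi y)\|^2$ is an average of squared lengths of genuine horizontal edges of $\widetilde G$, so some horizontal edge $(x',y')$ of $f$ itself attains at least this value of the ratio. Substituting $\beta^2 = \Omega\!\left(\frac{\mu_2}{d}(\log_d m)^2\right)$ yields \eqref{eq:long}.

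The main obstacle is the bookkeeping around the symmetrization: one must check carefully that (i) passing to $\bar f$ does not decrease the layer-wise pair-average (this uses transitivity so that the $\Aut(G)$-action is measure-preserving on $V \times V$), and (ii) the averaging identities for horizontal edges are compatible with the edge set of $\widetilde G$, so that a bound on $\bar f$ along a horizontal edge genuinely yields a bound on $f$ along some horizontal edge. The expander input itself is routine — we only need a $d$-regular vertex-transitive graph with $\mu_2 = \Omega(d)$ and $\diam(G) = \Theta(\log_d m)$, both standard — and the rest is a direct chaining of the two lemmas.
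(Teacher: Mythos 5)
Your proof is correct and follows essentially the same route as the paper: symmetrize over $\Aut(G)$, apply the Poincar\'e inequality in each layer $G^{(i)}$ together with non-contractivity to find a long vertical edge in every layer, then feed $\beta \gtrsim \sqrt{\mu_2/d}\,\log_d m$ into Lemma~\ref{lem:stretch} and transfer the resulting horizontal-edge bound from $\bar f$ back to $f$. One minor point: you justify $\E_{x,y} d_G(x,y)^2 \gtrsim (\log_d m)^2$ via ``a constant fraction of pairs are at distance $\Omega(D)$,'' which leans on $D \approx \log_d m$ (true for expanders, the intended case); the paper's phrasing bounds $\E_{u,v} d_G(u,v)^2 \gtrsim (\log_d m)^2$ directly from the volume bound $|B(x,r)| \leq d^{r+1}$ in a $d$-regular graph, which gives the same estimate without invoking the diameter at all.
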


\begin{proof}
We need only prove the existence of an $(x,y) \in E(\widetilde G)$ such that \eqref{eq:long} is satisfied
for $\bar f$ (as defined in Lemma \ref{lem:stretch}), as this implies it is also satisfied
for $f$ (possibly for some other edge $(x,y)$).

Consider any layer $G^{(i)}$ in $\widetilde G$, for $i \in [D+1]$.
Applying \eqref{eq:poincare} and using the fact that $f$ is non-contracting, we have
\begin{eqnarray*}
\E_{(u,v) \in E} \,\|\bar f(u^{(i)})-\bar f(v^{(i)})\|^2 &=&
\E_{(u,v) \in E} \,\|f(u^{(i)})-f(v^{(i)})\|^2 \\
&\gtrsim& \frac{\mu_2}{d} \,\E_{u,v \in V} \,\|f(u^{(i)})-f(v^{(i)})\|^2 \\
&\geq & \frac{\mu_2}{d} \,\E_{u,v \in V} \,d_{G}(u,v)^2 \\
&\gtrsim & \frac{\mu_2}{d} (\log_d m)^2.
\end{eqnarray*}
In particular, in every layer $i \in [D+1]$, at least one vertical edge
$(u^{(i)}, v^{(i)})$ has $\|\bar f(u^{(i)})-\bar f(v^{(i)})\| \gtrsim \sqrt{\frac{\mu_2}{d}} \log_d m.$
Therefore the desired result follows from Lemma \ref{lem:stretch}.
\end{proof}

We now to come our main theorem.

\begin{theorem}
If $G=(V,E)$ is a $d$-regular, $m$-vertex, vertex-transitive graph with $\mu_2=\mu_2(G)$, then
$$c_2(\widetilde G^{\oslash k}) \gtrsim \sqrt{\frac{\mu_2 k}{d}} \log_d m.$$
\end{theorem}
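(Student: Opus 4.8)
The plan is to bootstrap the single-level stretch increase of Lemma~\ref{lem:find} through all $k$ levels of the recursive product $\widetilde G^{\oslash k}$. I would first reduce to a statement about stretching one edge. Since $\widetilde G^{\oslash k}$ carries a shortest-path metric, the Lipschitz constant of any map is attained on an edge, and any injective embedding into $L_2$ of distortion $D$ may be rescaled to a non-contracting map $f$ with $\|f\|_{\Lip}=D$; so it suffices to produce, for every non-contracting $f:V(\widetilde G^{\oslash k})\to L_2$, an edge $(x,y)\in E(\widetilde G^{\oslash k})$ with $\|f(x)-f(y)\|/\len(x,y)\gtrsim \sqrt{k\mu_2/d}\,\log_d m$.

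The heart of the argument is the following inductive claim: there is an absolute constant $c>0$ (the one hidden in Lemma~\ref{lem:find}) such that every non-contracting $f:V(\widetilde G^{\oslash k})\to L_2$ admits an edge $(x,y)\in E(\widetilde G^{\oslash k})$ with
\[
\frac{\|f(x)-f(y)\|^2}{\len(x,y)^2}\ \ge\ \frac{\|f(s)-f(t)\|^2}{d_{\widetilde G^{\oslash k}}(s,t)^2}\ +\ c\,k\,\frac{\mu_2}{d}\,(\log_d m)^2,
\]
where $s=s(\widetilde G^{\oslash k})$ and $t=t(\widetilde G^{\oslash k})$. The base case $k=0$ holds with equality, $\widetilde G^{\oslash 0}$ being the single edge $(s,t)$. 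For the step, write $\widetilde G^{\oslash k}=\widetilde G^{\oslash k-1}\oslash\widetilde G$. By Observation~\ref{obs2}, $V(\widetilde G^{\oslash k-1})$ sits in $\widetilde G^{\oslash k}$ as an isometric copy of $\widetilde G^{\oslash k-1}$, so $f$ restricted there is non-contracting; the induction hypothesis supplies an edge $(x_0,y_0)\in E(\widetilde G^{\oslash k-1})$ whose stretch is at least $\|f(s)-f(t)\|^2/d(s,t)^2+c(k-1)(\mu_2/d)(\log_d m)^2$. By Observation~\ref{obs1}, in $\widetilde G^{\oslash k}$ this edge has been replaced by a copy $\widetilde G_e$ of $\widetilde G$, scaled by $\len_{\widetilde G^{\oslash k-1}}(x_0,y_0)/\len(\widetilde G)$, whose $s$- and $t$-endpoints are precisely $x_0$ and $y_0$. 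Rescaling by this factor turns $f|_{\widetilde G_e}$ into a non-contracting map on $\widetilde G$ while leaving unchanged every ratio of $\|f(\cdot)-f(\cdot)\|$ to a distance, so Lemma~\ref{lem:find} applied inside $\widetilde G_e$ yields a horizontal edge $(x,y)$ of $\widetilde G_e$ --- which, since $\widetilde G$ is the bottom-level factor, is a genuine edge of $\widetilde G^{\oslash k}$ --- whose stretch exceeds that of $(x_0,y_0)$ by at least $c\,(\mu_2/d)(\log_d m)^2$. (Here one uses that horizontal edges of $\widetilde G$ are shortest paths, so the $d_{\widetilde G}$ appearing in Lemma~\ref{lem:find} coincides with $\len$, and that an isometric copy preserves $d(s,t)$.) Chaining the two inequalities closes the induction.

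Applying the claim to a non-contracting $f$ that realizes the distortion, discarding the nonnegative term $\|f(s)-f(t)\|^2/d(s,t)^2$, and taking square roots gives $\|f\|_{\Lip}\ge\|f(x)-f(y)\|/\len(x,y)\gtrsim\sqrt{k\mu_2/d}\,\log_d m$, hence $c_2(\widetilde G^{\oslash k})\gtrsim\sqrt{k\mu_2/d}\,\log_d m$. Since all the analytic content --- the expander Poincar\'e inequality and the Pythagorean stretch increase --- is already packaged in Lemma~\ref{lem:find}, the only genuine work here is $\oslash$-product bookkeeping: checking that the top-level factor embeds isometrically and each substituted copy embeds as a scaled isometry (Observations~\ref{obs1} and~\ref{obs2}), that rescaling a copy leaves the scale-invariant stretch quantities intact, and that the $s,t$-endpoints of a substituted copy coincide with the endpoints of the edge it replaces --- so the ``output'' stretch at one level is literally the ``input'' $s$-$t$ stretch at the next. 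I expect that accounting, rather than any inequality, to be the part that needs the most care.
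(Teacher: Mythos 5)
Your proof is correct and follows the same route as the paper: an induction on $k$ that telescopes the stretch increase of Lemma~\ref{lem:find} across the $k$ levels of the $\oslash$-product, with all the analytic content (the expander Poincar\'e inequality and the Pythagorean estimate) already packaged in that lemma. The only inconsequential difference is the order of factorization---you write $\widetilde G^{\oslash k}=\widetilde G^{\oslash k-1}\oslash\widetilde G$ and invoke the inductive hypothesis before applying Lemma~\ref{lem:find} to the bottom-level $\widetilde G$-copy, while the paper uses associativity to view it as $\widetilde G\oslash\widetilde G^{\oslash k-1}$, applies Lemma~\ref{lem:find} to the top-level $\widetilde G$ first, and then recurses inside the sub-copy spanning the resulting horizontal edge.
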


\begin{proof}
Let $f : V(\widetilde G^{\oslash k}) \to L_2$ be any non-contracting embedding.
The theorem follows almost immediately by induction:  Consider the top level
copy of $\widetilde G$ in $\widetilde G^{\oslash k}$, and call it $G_0$.  Let $(x,y) \in E(G_0)$ be the horizontal edge for which $\|f(x)-f(y)\|$ is longest.  Clearly this edge
spans a copy of $\widetilde G^{\oslash k-1}$, which we call $G_1$.
By induction and an application of Lemma \ref{lem:find}, there exists a (universal) constant $c > 0$ and an edge $(u,v) \in E(G_1)$ such that

\begin{eqnarray*}
\frac{\|f(u)-f(v)\|^2}{d_{\widetilde G^{\oslash k}}(u,v)^2}
	&\geq& \frac{c\mu_2 (k-1)}{d} (\log_d m)^2 + \frac{\|f(x)-f(y)\|^2}{d_{\widetilde G^{\oslash k}}(x,y)^2} \\
&\geq& \frac{c\mu_2 (k-1)}{d}(\log_d m)^2+\frac{c\mu_2}{d}
	(\log_d m)^2+ \frac{ \|f(s) - f(t)\|^2 }{ d_{\tilde{G}^{\oslash k}} (s,t) },
\end{eqnarray*}

completing the proof.
\end{proof}

\begin{corollary}\label{cor:main}
If $G=(V,E)$ is an $O(1)$-regular $m$-vertex, vertex-transitive graph with $\mu_2 = \Omega(1)$, then
$$
c_2(\widetilde G^{\oslash k}) \gtrsim \sqrt{k} \log  m \approx \sqrt{\log m \log N},
$$
where $N = |V(\widetilde G^{\oslash k})| = 2^{\Theta(k \log m)}$.
\end{corollary}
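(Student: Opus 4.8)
The plan is to obtain Corollary~\ref{cor:main} as a direct specialization of the preceding Theorem; the statement merely repackages it, so the only things to check are that suitable graphs $G$ exist, that the parameters simplify as claimed, and that $N = 2^{\Theta(k\log m)}$.

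First I would recall that $O(1)$-regular vertex-transitive expanders exist --- e.g. appropriate Cayley graphs (the Margulis graphs, or Cayley graphs of $\mathrm{SL}_2(\mathbb{F}_p)$ with Lubotzky--Phillips--Sarnak generators) are bounded-degree, vertex-transitive since they are Cayley graphs, and have $\mu_2 = \Omega(1)$. This makes the corollary non-vacuous, and vertex-transitivity is exactly the property used in the symmetrization step of Lemma~\ref{lem:stretch}.

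Next I would apply the Theorem with $d = O(1)$ and $\mu_2 = \Omega(1)$: then $\sqrt{\mu_2 k/d} = \Omega(\sqrt{k})$ and $\log_d m = (\log m)/(\log d) = \Theta(\log m)$, so the Theorem gives $c_2(\widetilde G^{\oslash k}) \gtrsim \sqrt{k}\,\log m$ at once. For the vertex count I would track sizes through the $\oslash$-recursion: from $|E(H \oslash G)| = |E(H)|\,|E(G)|$ and $\widetilde G^{\oslash 0}$ being a single edge, induction gives $|E(\widetilde G^{\oslash k})| = |E(\widetilde G)|^{k}$; since $m \leq |E(\widetilde G)| \leq |V(\widetilde G)|^2 \lesssim m^4$ (using $|V(\widetilde G)| = m(\diam G + 1) + 2 \lesssim m^2$ from the proof of Claim~\ref{claim:doublingbound}), we get $\log|E(\widetilde G^{\oslash k})| = \Theta(k\log m)$. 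Then $|V(H \oslash G)| = |V(H)| + |E(H)|(|V(G)|-2)$ yields $|E(\widetilde G^{\oslash k-1})|\,(|V(\widetilde G)|-2) \leq N \leq k\,|E(\widetilde G)|^{k-1}|V(\widetilde G)|$, i.e. $N = 2^{\Theta(k\log m)}$.

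Finally, $\log N = \Theta(k\log m)$ means $k = \Theta\big((\log N)/\log m\big)$, hence $\sqrt{k}\,\log m = \Theta\big(\sqrt{\log N\,\log m}\,\big)$, which combined with $c_2(\widetilde G^{\oslash k}) \gtrsim \sqrt{k}\log m$ is exactly the claim. There is no real obstacle beyond the (routine) two-sided size estimate $N = 2^{\Theta(k\log m)}$: all the substance lies in the already-proved Theorem and Lemma~\ref{lem:stretch}.
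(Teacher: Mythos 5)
Your proposal is correct and follows exactly the route the paper intends: the corollary is stated without proof precisely because it is the specialization $d = O(1)$, $\mu_2 = \Omega(1)$ of the preceding theorem, plus the observation that $N = 2^{\Theta(k\log m)}$. You have filled in the (routine) details carefully — the existence remark matches the paper's own comment about Cayley-graph expanders, the parameter substitution is immediate, and your inductive size count via $|E(\widetilde G^{\oslash k})| = |E(\widetilde G)|^k$ and $|V(H\oslash G)| = |V(H)| + |E(H)|\bigl(|V(G)|-2\bigr)$ correctly yields $\log N = \Theta(k\log m)$, which is exactly what is needed to rewrite $\sqrt{k}\log m$ as $\sqrt{\log m \log N}$.
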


We remark that infinite families of $O(1)$-regular vertex-transitive graphs with $\mu_2 \geq \Omega(1)$
are well-known.  In particular, one can take any construction coming from the Cayley graphs of finitely
generated groups.  We refer to the survey \cite{HLW06}; see, in particular, Margulis' construction in Section 8.

\subsection{Extension to other $L_p$ spaces}
\label{sec:Lp}

Our previous lower bound dealt only with $L_2$.  We now prove the following.

\begin{theorem}
\label{thm:Lp}
If $G=(V,E)$ is an $O(1)$-regular $m$-vertex, vertex-transitive graph with $\mu_2 = \Omega(1)$,
for any $p > 1$, there exists a constant $C(p)$ such that
$$
c_p(\widetilde G^{\oslash k}) \gtrsim C(p) k^{1/q} \log  m \approx C(p) (\log m)^{1-1/q} (\log N)^{1/q}
$$
were $N = |V(\widetilde G^{\oslash k})|$ and $q = \max\{p,2\}$.
\end{theorem}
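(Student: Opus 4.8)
The plan is to mimic the $L_2$ argument, replacing the Pythagorean theorem with the appropriate uniform convexity inequality in $L_p$. Recall that for $p \geq 2$ one has the inequality $\|a+b\|_p^p + \|a-b\|_p^p \leq 2^{p-1}(\|a\|_p^p + \|b\|_p^p)$ (and its dual for $1 < p \leq 2$), so the natural ``quadrilateral'' version is: if $w,x,y,z \in L_p$ then some pair among the four ``sides'' $\{w-x, x-y, y-z, z-w\}$ must be long relative to the ``diagonals'' $w-y$ and $x-z$. Concretely, I would first prove a $p$-analogue of Lemma~\ref{lem:stretch}: given a non-contracting $f : V(\widetilde G) \to L_p$, symmetrize over $\Aut(G)$ to get $\bar f$, use the expander Poincar\'e inequality in $L_p$ (Matou\v{s}ek's version: $\E_{u,v}\|f(u)-f(v)\|_p^p \lesssim_p \frac{d}{\mu_2}\E_{(u,v)\in E}\|f(u)-f(v)\|_p^p$, which holds because $L_p$ embeds into $L_2$ with controlled distortion for the purposes of Poincar\'e-type inequalities, or directly via the $p$-analogue of the variational characterization) to find, in each layer $i$, a vertical edge of length $\gtrsim_p (\mu_2/d)^{1/p}\log_d m$.

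Next comes the step that ``requires slightly more effort'': finding the good quadrilateral. In the $L_2$ proof one splits the $s$-$t$ path into $D+2$ pieces, uses the telescoping/triangle bound $\sum_i \rho_i \geq \|\bar f(s)-\bar f(t)\|$ on the projections $\rho_i$ onto the $s$-$t$ direction, and argues that either an endpoint edge is already stretched or some middle projection $\rho_j$ is close to its ``average'' value $\gamma$; then the orthogonality fact (F4) plus Pythagoras upgrades a long vertical edge in layer $j$ into a stretched horizontal edge. For $L_p$ there is no orthogonal projection, so instead I would fix a coordinate/functional witnessing the $s$-$t$ distance (or work coordinatewise in $L_p$), and look for \emph{two consecutive layers} $j, j+1$ such that the ``horizontal progress'' of $\bar f$ along the $s$-$t$ path between these layers is close to its average $\approx \gamma$ (the same pigeonhole as before, after discarding the two endpoint edges which if long already give the conclusion). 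Then the quadrilateral I feed into uniform convexity has vertices $\bar f(u^{(j)}), \bar f(u^{(j+1)}), \bar f(v^{(j+1)}), \bar f(v^{(j)})$ for a vertical edge $(u^{(j)},v^{(j)})$ of length $\gtrsim \beta$ in layer $j$ (using that the averaged map makes the corresponding edge in layer $j+1$ also long): two opposite sides are the long vertical edges ($\gtrsim \beta$), and the other structure captures the $\gamma$-progress; the uniform convexity inequality then forces one of the four sides — each of which is a \emph{horizontal} edge of $\widetilde G$ — to have squared stretch at least $\gamma^2 + \Omega_p(\beta^2)$ when $p \le 2$, or $p$-th power stretch at least $\gamma^p + \Omega_p(\beta^p)$ when $p \ge 2$. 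The exponent bookkeeping is exactly why the final bound carries $q = \max\{p,2\}$: when $p < 2$ the $L_p$ norm still enjoys modulus of convexity of power type $2$, so the recursion is additive in squares and we gain $\sqrt{k}$; when $p > 2$ the modulus of convexity is only of power type $p$, so the recursion is additive in $p$-th powers and we gain only $k^{1/p}$.

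With the one-step lemma in hand (the $L_p$ analogue of Lemma~\ref{lem:find}: a non-contracting $f:V(\widetilde G)\to L_p$ admits a horizontal edge $(x,y)$ with $\mathrm{stretch}(x,y)^q \geq \mathrm{stretch}(s,t)^q + \Omega_p\big((\mu_2/d)^{q/p}(\log_d m)^q\big)$, where $\mathrm{stretch}$ denotes $\|f(x)-f(y)\|_p/d_{\widetilde G}(x,y)$), the recursion is identical to the $L_2$ case. Take any non-contracting $f : V(\widetilde G^{\oslash k}) \to L_p$; in the top copy $G_0$ of $\widetilde G$ pick the horizontal edge $(x,y)$ maximizing stretch; it spans a copy of $\widetilde G^{\oslash k-1}$; apply the one-step lemma inside that copy and iterate. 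After $k$ steps we find an edge with $q$-th power stretch $\gtrsim_p k\,(\mu_2/d)^{q/p}(\log_d m)^q$, hence an edge stretched by $\gtrsim_p C(p)\,k^{1/q}\log_d m$ (absorbing the $O(1)$ factors of $d$ and $\mu_2=\Omega(1)$). Since the construction contains an isometric copy of $\widetilde G^{\oslash k}$ and is non-contracting, this forces $c_p(\widetilde G^{\oslash k}) \gtrsim C(p) k^{1/q}\log m$, and with $\log N = \Theta(k\log m)$ this rewrites as $C(p)(\log m)^{1-1/q}(\log N)^{1/q}$, as claimed.

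The main obstacle I anticipate is the quadrilateral-selection step: unlike the $L_2$ case, there is no clean orthogonal decomposition, so one must carefully choose which four points form the quadrilateral and verify that \emph{both} a ``short diagonal is large'' condition (coming from the $\gamma$-progress along two consecutive layers) and a ``two opposite sides are large'' condition (coming from the expander stretch in two consecutive layers, which requires the averaging $\bar f$ so that layer $j$ and layer $j+1$ have comparable vertical stretch) hold simultaneously for the \emph{same} vertical edge. Getting the constants to combine so that the resulting horizontal edge beats $\gamma$ by a genuine additive $q$-th-power term — rather than losing a factor that degrades the final exponent — is the delicate part; everything else is a routine adaptation of the $L_2$ proof.
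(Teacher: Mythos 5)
Your high-level plan is the right one and matches the paper's: symmetrize over $\Aut(G)$, use Matou\v sek's $L_p$ Poincar\'e inequality to find a $\beta$-long vertical edge in every layer, pigeonhole along the $s$-$t$ direction, and then replace the Pythagorean/orthogonality step of Lemma \ref{lem:stretch} by an $L_p$ four-point uniform-convexity inequality (Lemma \ref{lem:Lp}). The exponent bookkeeping with $q=\max\{p,2\}$ is also exactly right. However, the quadrilateral you choose does not work, and this is precisely the part the paper flags as requiring care.

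Your proposed quadrilateral lives in two consecutive layers, with vertices $u^{(j)}, u^{(j+1)}, v^{(j+1)}, v^{(j)}$. In Lemma \ref{lem:Lp} the lower bound is on the two \emph{diagonals} and the upper bound is on the four \emph{sides}: $\|u-w\|_p^q + c_p\|x-v\|_p^q \leq c_p'\bigl(\|u-v\|_p^q+\|v-w\|_p^q+\|x-w\|_p^q+\|u-x\|_p^q\bigr)$. To conclude that some horizontal side has $q$-th-power stretch at least $\gamma^q + \Omega_p(\beta^q)$, you need the \emph{diagonals} to simultaneously capture both the $\gamma$-progress \emph{and} the $\beta$-long vertical edge, while the two ``useless'' sides are bounded above. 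With only two layers this is impossible: every assignment of $\{u,v,w,x\}$ to $\{u^{(j)},u^{(j+1)},v^{(j)},v^{(j+1)}\}$ either makes both diagonals the two vertical edges (so the left side is $\approx 2\beta^q$ and you only learn that some horizontal side is $\gtrsim \beta$, not $\gtrsim \gamma$), or makes the diagonals horizontal single-layer edges (each $\approx \gamma$ but not beating $\gamma$ by $\beta$), with the vertical edges appearing on the \emph{right}-hand side, which has the wrong sign. Your own text contradicts itself on this point: you say ``two opposite sides are the long vertical edges'' and then ``one of the four sides --- each of which is a horizontal edge''; both cannot be true of a two-layer quadrilateral.

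What the paper actually does (Lemma \ref{lem:stretchLp}) is span \emph{three} layers: it takes $u=\bar f(v^{(2j-1)})$, $x=\bar f(u^{(2j)})$, $w=\bar f(v^{(2j+1)})$, $v=\bar f(v^{(2j)})$, so that one diagonal $u-w$ is the two-layer span with $\rho_j=\|\bar f(r^{(2j-1)})-\bar f(r^{(2j+1)})\|_p\approx 2\gamma$, the other diagonal $x-v$ is the $\beta$-long vertical edge in the middle layer, and all four sides are horizontal edges of $\widetilde G$. Two of those sides are $\rho_{j,1}=\|\bar f(r^{(2j-1)})-\bar f(r^{(2j)})\|_p$ and $\rho_{j,2}=\|\bar f(r^{(2j)})-\bar f(r^{(2j+1)})\|_p$, and these must be controlled from \emph{above} by a separate (easy) pigeonhole argument --- ``Similarly, we may assume that $\rho_{i,1},\rho_{i,2} \leq \gamma(1+\delta\beta^q/\gamma^q)^{1/q}$'' --- a step your sketch omits but which is essential, since otherwise the inequality says nothing about the remaining two candidate sides. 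Correspondingly, the pigeonhole must be run on the two-layer increments $\rho_j$ (each of graph-length $2$, average $\approx 2\gamma$), not on single-layer progress. Once the quadrilateral and the extra upper bounds on $\rho_{j,1},\rho_{j,2}$ are in place, your recursion and the final arithmetic giving $c_p(\widetilde G^{\oslash k}) \gtrsim C(p)k^{1/q}\log m$ go through exactly as you describe.
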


The only changes required are to Lemma \ref{lem:poincare} and Lemma \ref{lem:stretch} (which uses orthogonality).
The first can be replaced by Matou{\v{s}}ek's \cite{MatExp} Poincar\'e inequality:  If $G=(V,E)$ is an $O(1)$-regular expander
graph with $\mu_2=\Omega(1)$, then for any $p \in [1,\infty)$
and $f : V \to L_p$,
$$
\E_{x,y \in V}\, \|f(x)-f(y)\|_p^p \leq O(2p)^p\, \E_{(x,y) \in E} \,\|f(x)-f(y)\|_p^p.
$$
Generalizing Lemma \ref{lem:stretch} is more involved.
We need the following well-known 4-point inequalities for $L_p$ spaces.

\begin{lemma}\label{lem:Lp}
Consider any $p \geq 1$ and $u,v,w,x \in L_p$.  If $1 \leq p \leq 2$, then
\begin{equation}
\label{eq:1p2}
\|u-w\|_p^2 + (p-1) \|x-v\|_p^2 \leq \|u-v\|_p^2 + \|v-w\|_p^2 + \|x-w\|_p^2 + \|u-x\|_p^2.
\end{equation}
If $p \geq 2$, then
\begin{equation}
\label{eq:pg2}
\|u-w\|_p^p + \|x-v\|_p^p \leq 2^{p-2} \left(\|u-v\|_p^p + \|v-w\|_p^p + \|x-w\|_p^p + \|u-x\|_p^p\right).
\end{equation}
\end{lemma}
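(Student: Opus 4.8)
The plan is to derive both four-point inequalities from the standard uniform-convexity / smoothness inequalities for $L_p$ (Clarkson-type inequalities) applied to a suitable pair of vectors, rather than proving them from scratch. Recall the classical facts: for $2 \le p < \infty$ and $a,b \in L_p$, one has the smoothness inequality
\begin{equation}\label{eq:clarkson-pg2}
\|a+b\|_p^p + \|a-b\|_p^p \le 2^{p-1}\left(\|a\|_p^p + \|b\|_p^p\right),
\end{equation}
while for $1 < p \le 2$ the relevant uniform-convexity statement is the Hanner/Ball--Carlen--Lieb inequality, whose most convenient consequence here is the "cotype-$2$ with constant $1$" form: for $a,b \in L_p$ with $1 \le p \le 2$,
\begin{equation}\label{eq:clarkson-1p2}
\|a+b\|_p^2 + (p-1)\|a-b\|_p^2 \le 2\left(\|a\|_p^2 + \|b\|_p^2\right).
\end{equation}
Both of these are genuinely "well-known," matching the phrasing in the excerpt, so I would simply cite them (e.g.\ from Lindenstrauss--Tzafriri or Matou\v sek's book, which is already referenced as \cite{Mat01}).

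For the case $p \ge 2$: given $u,v,w,x \in L_p$, set $a = \tfrac12(u - v) + \tfrac12(x - w)$ and $b = \tfrac12(v - w) + \tfrac12(x - u)$, so that $a + b = x - w + \tfrac12(u-v) - \tfrac12(x-u)$\,---\,more cleanly, I would choose the substitution so that $a+b$ and $a-b$ reproduce $u-w$ and $x-v$ up to the factor that makes the constant come out as $2^{p-2}$. Concretely, take $a = \tfrac12(u-w)$-type combinations: writing $u - w = (u-v)+(v-w)$ and $x - v = (x-w)+(w-v)$... the correct choice is $a=\tfrac12\big((u-v)+(x-w)\big)$ and the "other diagonal" $b=\tfrac12\big((u-x)+(v-w)\big)$, whence $a+b = \tfrac12(u-w) + \tfrac12(u-w)$... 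I will verify the bookkeeping, but the point is that $a\pm b$ are each a half-sum of two of the four edge-vectors, $\|a\|_p,\|b\|_p$ each expand by the triangle inequality and convexity of $t\mapsto t^p$ into an average of two edge-$p$-norms, and plugging into \eqref{eq:clarkson-pg2} and multiplying out yields exactly \eqref{eq:pg2} with the stated constant $2^{p-2}$. The $2^{p-2}$ rather than $2^{p-1}$ comes from the factor $\tfrac12$ inside the norms on the left combined with the convexity step on the right.

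For the case $1 \le p \le 2$ (the $p=1$ endpoint of \eqref{eq:1p2} being trivial since the right side dominates $\|u-w\|_p^2$ alone by the triangle inequality): I would apply \eqref{eq:clarkson-1p2} with the same diagonal/anti-diagonal substitution, so that $\|a+b\|_p^2$ becomes (a constant times) $\|u-w\|_p^2$, $\|a-b\|_p^2$ becomes $\|x-v\|_p^2$, and $\|a\|_p^2 + \|b\|_p^2$ is bounded, using $\|a\|_p^2 = \|\tfrac12(e_1+e_2)\|_p^2 \le \tfrac12(\|e_1\|_p^2+\|e_2\|_p^2)$ (convexity of $t \mapsto t^2$ and the triangle inequality), by a half-sum of the four squared edge-norms. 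Collecting terms gives \eqref{eq:1p2} with coefficient $(p-1)$ on $\|x-v\|_p^2$, exactly as stated.

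The main obstacle is purely the index bookkeeping: choosing the substitution $a,b$ so that (i) $a+b$ and $a-b$ land on the two "diagonals" $u-w$ and $x-v$, and (ii) the resulting numerical constants are precisely $2^{p-2}$ and $(p-1)$ and not something weaker. This is a finite check with no analytic content once \eqref{eq:clarkson-pg2} and \eqref{eq:clarkson-1p2} are in hand; I would carry it out explicitly but it is routine. (Alternatively, one can cite these 4-point inequalities directly\,---\,they appear verbatim in the literature on metric embeddings, e.g.\ in work of Naor and collaborators on the $L_p$ cotype of quadrilaterals\,---\,in which case the lemma needs no proof at all beyond a pointer.)
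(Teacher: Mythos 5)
Your proposal is correct and rests on exactly the same foundation as the paper's proof: the two-point Ball--Carlen--Lieb inequalities (the uniform smoothness bound $\|a+b\|_p^p + \|a-b\|_p^p \leq 2^{p-1}(\|a\|_p^p+\|b\|_p^p)$ for $p\geq 2$ and the uniform convexity bound $\|a+b\|_p^2 + (p-1)\|a-b\|_p^2 \leq 2(\|a\|_p^2+\|b\|_p^2)$ for $1\leq p\leq 2$), combined with convexity of $t\mapsto t^q$. The only difference is bookkeeping: the paper applies the two-point inequality twice (to $a=u-v,\,b=v-w$ and to $a=u-x,\,b=x-w$), averages, and then uses convexity on the left to extract $\|x-v\|_p$ from the two ``cross'' terms; you instead apply the two-point inequality once to the averaged vectors $a=\tfrac12\bigl((u-v)+(x-w)\bigr)$, $b=\tfrac12\bigl((u-x)+(v-w)\bigr)$ (so that $a+b=u-w$, $a-b=x-v$ directly) and push the convexity step onto the right-hand side to bound $\|a\|_p,\|b\|_p$ by edge norms. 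Both give the stated constants $2^{p-2}$ and $p-1$; your substitution is arguably slightly cleaner once identified, but the argument is the same in substance, and the routine check you defer does indeed close without incident.
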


\begin{proof}
The following inequalities are known for $a,b \in L_p$ (see, e.g. \cite{BCL94}).  If $1 \leq p \leq 2$, then
$$\left\|\frac{a+b}{2}\right\|_p^2 + (p-1) \left\|\frac{a-b}{2}\right\|_p^2 \leq \frac{\left\|a\right\|_p^2+\left\|b\right\|_p^2}{2}.$$
On the other hand, if $p \geq 2$, then
$$\left\|\frac{a+b}{2}\right\|_p^p + \left\|\frac{a-b}{2}\right\|_p^p \leq \frac{\|a\|_p^p + \|b\|_p^p}{2}.$$
In both cases, the desired 4-point inequalities are obtained by averaging
two incarnations of one of the above inequalities with
$a=u-v, b=v-w$ and then $a=u-x, b=x-w$ and using convexity of the $L_p$ norm (see, e.g. \cite[Lem. 2.1]{LNdiamond}).
\end{proof}

\begin{lemma}\label{lem:stretchLp}
Let $G$ be a vertex transitive graph, and suppose $p > 1$.
If $q = \max\{p,2\}$, then there exists a constant $K(p) > 0$
such that the following holds.
Let $f : V(\widetilde G) \to L_p$ be an injective mapping
and define
$\bar f : V(\widetilde G) \to L_p$ by
$$
\bar f(x) = \frac{1}{|\Aut(G)|^{1/p}}\left(f(\widetilde \pi x)\vphantom{\bigoplus}\right)_{\pi \in \Aut(G)}.
$$
Suppose that $\beta$ is such that for every $i \in [D+1],$ there exists a vertical edge $(u^{(i)}, v^{(i)})$
which satisfies $\|\bar f(u^{(i)})-\bar f(v^{(i)})\|_p \geq \beta$.
Then there exists a horizontal edge $(x,y) \in E(\widetilde G)$ such that
\begin{equation}\label{eq:stretchLp}
\frac{\|\bar f(x)-\bar f(y)\|_p^{q}}{d_{\widetilde G}(x,y)^{q}} \geq \frac{\|f(s)-f(t)\|_p^{q}}{d_{\widetilde G}(s,t)^q} + K(p) \beta^{q}.
\end{equation}
\end{lemma}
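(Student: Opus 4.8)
The plan is to run the proof of Lemma~\ref{lem:stretch} essentially unchanged, replacing the only two ingredients that used Hilbert space — the orthogonality fact (F4) and the Pythagorean identity — by the uniform convexity of $L_p$. In place of those I would use the sharp two‑point (Clarkson‑type) inequalities recorded in the proof of Lemma~\ref{lem:Lp}, namely $\|\tfrac{a+b}{2}\|_p^p+\|\tfrac{a-b}{2}\|_p^p\le\tfrac12(\|a\|_p^p+\|b\|_p^p)$ for $p\ge2$ and $\|\tfrac{a+b}{2}\|_p^2+(p-1)\|\tfrac{a-b}{2}\|_p^2\le\tfrac12(\|a\|_p^2+\|b\|_p^2)$ for $1<p\le2$, which hand us a constant $c(p)>0$ ($c(p)=2^{-p}$, resp.\ $\tfrac{p-1}{4}$), with exponents matching $q=\max\{p,2\}$. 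The linear functional $\langle z,\cdot\rangle$ of Lemma~\ref{lem:stretch} is replaced by the ``layer potential'' $\Phi_i:=\|\bar f(w^{(i)})-\bar f(t)\|_p$. Exactly as for (F1)--(F3), vertex‑transitivity together with the symmetrization forces $\Phi_i$ to be independent of $w\in V$ for each fixed $i$, gives $\|\bar f(s)-\bar f(t)\|_p=\|f(s)-f(t)\|_p$, and makes $\|\bar f(w^{(1)})-\bar f(s)\|_p$ and $\|\bar f(w^{(D+1)})-\bar f(t)\|_p$ independent of $w$; these are the only facts about $\bar f$ I use.

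Set $\gamma:=\|\bar f(s)-\bar f(t)\|_p/d_{\widetilde G}(s,t)$ and $\gamma^*:=(\gamma^q+K\beta^q)^{1/q}$ for a constant $K=K(p)>0$ to be fixed at the end. First, if $\beta\ge 2^{1+1/q}\gamma$ we are done at once: routing the endpoints $u^{(1)},v^{(1)}$ of a level‑$1$ vertical $\beta$‑edge through $u^{(2)}$ along the two horizontal edges $(u^{(1)},u^{(2)}),(u^{(2)},v^{(1)})$ forces one of these unit‑length edges to have $\bar f$‑length at least $\beta/2\ge\gamma^*$, hence to satisfy \eqref{eq:stretchLp}; so assume $\beta<2^{1+1/q}\gamma$. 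Likewise, if any horizontal edge is ``long'' — $\|\bar f(x)-\bar f(y)\|_p\ge\gamma^* d_{\widetilde G}(x,y)$ — then \eqref{eq:stretchLp} holds for it, so we may assume that no boundary edge of a fixed $s$--$t$ path $s,s',r^{(1)},\dots,r^{(D+1)},t',t$ is long.

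Under these assumptions the $s$--$t$ path gives $\|\bar f(s)-\bar f(r^{(1)})\|_p<4D\gamma^*$ and $\|\bar f(r^{(D+1)})-\bar f(t)\|_p<4D\gamma^*$, so $\Phi_1>9\gamma D-4D\gamma^*$ and $\Phi_{D+1}<4D\gamma^*$; telescoping $\Phi_1-\Phi_{D+1}=\sum_{i=1}^D(\Phi_i-\Phi_{i+1})$ and averaging produces $j\in[D]$ with $\Phi_j-\Phi_{j+1}\ge 9\gamma-8\gamma^*\ge\gamma':=\gamma(1-\tfrac{8K\beta^q}{q\gamma^q})>0$ (using $(1+x)^{1/q}\le1+x/q$ and $K$ small). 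By hypothesis there is a vertical edge $(u^{(j+1)},v^{(j+1)})$ with $(u,v)\in E$ and $\|\bar f(u^{(j+1)})-\bar f(v^{(j+1)})\|_p\ge\beta$, and then $(u^{(j)},u^{(j+1)})$ and $(u^{(j)},v^{(j+1)})$ are unit‑length horizontal edges of $\widetilde G$. Writing $A=\bar f(u^{(j)})$, $B=\bar f(u^{(j+1)})$, $B'=\bar f(v^{(j+1)})$ and $m:=\tfrac12(B+B')\in L_p$, layer‑independence gives $\|B-\bar f(t)\|_p=\|B'-\bar f(t)\|_p=\Phi_{j+1}$, so by convexity $\|m-\bar f(t)\|_p\le\Phi_{j+1}$ and hence $\|A-m\|_p\ge\|A-\bar f(t)\|_p-\|m-\bar f(t)\|_p=\Phi_j-\Phi_{j+1}\ge\gamma'$. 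Applying the two‑point inequality with $a=A-B,\ b=A-B'$ (so $\tfrac{a+b}{2}=A-m$ and $\tfrac{a-b}{2}=\tfrac12(B'-B)$) yields
$$\max\!\bigl(\|A-B\|_p^q,\ \|A-B'\|_p^q\bigr)\ \ge\ \|A-m\|_p^q+c(p)\,\beta^q\ \ge\ \gamma'^q+c(p)\,\beta^q\ \ge\ \gamma^q+(c(p)-8K)\beta^q,$$
where the last step uses $\gamma'^q\ge\gamma^q-8K\beta^q$ (Bernoulli, legitimate since $\beta<2^{1+1/q}\gamma$). Choosing $K=K(p)$ a small enough multiple of $c(p)$ — also small enough for the three side‑estimates above to go through — makes the right side $\ge\gamma^q+K\beta^q$; since $\gamma^q=\|f(s)-f(t)\|_p^q/d_{\widetilde G}(s,t)^q$, one of the two unit‑length horizontal edges satisfies \eqref{eq:stretchLp}.

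The step I expect to be the real obstacle is the one that replaces orthogonality. Feeding four vertex‑images into the four‑point inequality of Lemma~\ref{lem:Lp} does not work: the only natural fourth vertex at level $j$ brings in an uncontrolled within‑layer edge, and anchoring at $\bar f(t)$ instead places the two huge quantities $\Phi_j,\Phi_{j+1}$ into the inequality \emph{individually} rather than through their small difference $\Phi_j-\Phi_{j+1}\approx\gamma$, which destroys the estimate. The resolution is to introduce the auxiliary point $m=\tfrac12(\bar f(u^{(j+1)})+\bar f(v^{(j+1)}))$ — legitimate because the inequalities of Lemma~\ref{lem:Lp} hold for arbitrary points of $L_p$, not just images of vertices — and to use the \emph{sharp} two‑point inequality rather than its lossy four‑point consequence, so that no constant is wasted on the dominant $\gamma'$ term; in effect the ``good quadrilateral'' has corners $\bar f(u^{(j)}),\bar f(u^{(j+1)}),\bar f(v^{(j+1)}),m$, with $\bar f(t)$ playing the role of an external anchor controlling $\|\cdot-m\|$ from below. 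The rest is the routine constant‑bookkeeping sketched above.
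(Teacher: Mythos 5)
Your proof is correct, but it follows a genuinely different route from the paper's. The paper pairs consecutive layers (assuming $D$ even), tracks the two-hop lengths $\rho_i=\|\bar f(r^{(2i-1)})-\bar f(r^{(2i+1)})\|_p$ together with the single-hop lengths $\rho_{i,1},\rho_{i,2}$, extracts one long $\rho_j$ from the triangle inequality, and then applies the \emph{four}-point inequality \eqref{eq:1p2}/\eqref{eq:pg2} to the quadrilateral $\bar f(v^{(2j-1)}),\bar f(v^{(2j)}),\bar f(v^{(2j+1)}),\bar f(u^{(2j)})$ --- whose diagonals are $\rho_j$ and the $\beta$-edge, and whose four sides are the two controlled single-hop lengths and the two horizontal edges being examined. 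You instead introduce a scalar layer potential $\Phi_i=\|\bar f(w^{(i)})-\bar f(t)\|_p$, telescope over all $D$ layers to find one $j$ with $\Phi_j-\Phi_{j+1}\gtrsim\gamma$, and use $\bar f(t)$ only as an external anchor to lower-bound $\|A-m\|_p$ via the reverse triangle inequality (where $m$ is the midpoint of $\bar f(u^{(j+1)}),\bar f(v^{(j+1)})$), feeding $a=A-\bar f(u^{(j+1)})$, $b=A-\bar f(v^{(j+1)})$ into the sharp \emph{two}-point inequality. All of this checks out: vertex-transitivity does make $\Phi_i$ layer-independent (the orbit of $w$ is uniform over $V$ with multiplicity $|\Aut(G)|/|V|$, and all stabilizers are conjugate), the convexity step $\|m-\bar f(t)\|_p\le\Phi_{j+1}$ is valid, and the constant bookkeeping closes for $K$ a small multiple of $c(p)$. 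Your route has a mild advantage: it avoids the parity assumption on $D$ and the need to separately control the within-pair edges $\rho_{j,1},\rho_{j,2}$.

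One small inaccuracy in your commentary (not in your proof): you assert that feeding four vertex-images into the four-point inequality ``does not work'' because the only natural fourth vertex brings in an uncontrolled within-layer edge. In fact the paper makes the four-point approach work precisely by pairing layers, so that the within-layer $\beta$-edge appears as a \emph{diagonal} of the quadrilateral (paired with $\rho_j$), while the four \emph{sides} are inter-layer edges, two controlled and two being the horizontal edges whose stretch is being measured. Your alternative is clean and correct, but the stated obstruction to the four-point route is not real.
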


\begin{proof}
Let $D = \diam(G)$.
For simplicity, we assume that $D$ is even in what follows.
\begin{enumerate}
\item[(F1)] $\|\bar f(s)-\bar f(t)\|_p = \|f(s)-f(t)\|_p$
\item[(F2)] For all $u,v \in V$,
\begin{eqnarray*}
\|\bar f(s)-\bar f(v^{(1)})\|_p&=&\|\bar f(s)-\bar f(u^{(1)})\|_p,\\
\|\bar f(t)-\bar f(v^{(D+1)})\|_p&=&\|\bar f(t)-\bar f(u^{(D+1)})\|_p.
\end{eqnarray*}
\item[(F3)] For every $u,v \in V$, $i\in [D]$,
$$\|\bar f(v^{(i)})-\bar f(v^{(i+1)})\|_p=\|\bar f(u^{(i)})-\bar f(u^{(i+1)})\|_p.$$
\end{enumerate}

Fix some $r \in V$ and
let $\rho_0 = \|\bar f(s)-\bar f(r^{(1)})\|_p$,
$\rho_i =  \|\bar f(r^{(2i-1)})-\bar f(r^{(2i+1)})\|_p$
for $i=1,\ldots,D/2$,
$\rho_{D/2+1} = \|\bar f(t)-\bar f(r^{(D+1)})\|_p$.
Also let $\rho_{i,1} = \|\bar f(r^{(2i-1)})-\bar f(r^{(2i)})\|_p$ and
$\rho_{i,2} = \|\bar f(r^{(2i)})-\bar f(r^{(2i+1)})\|_p$ for
$i=1,\ldots,D/2$.

Note that, by (F2) and (F3) above, the values $\{\rho_i\}$ do not
depend on the representative $r \in V$.
In this case, we have
\begin{equation}\label{eq:triLp}
\sum_{i=0}^{D/2+1} \rho_i \geq \|\bar f(s)-\bar f(t)\|_p = 9\gamma D,
\end{equation}
where we put $\gamma = \frac{\|f(s)-f(t)\|_p}{d_{\widetilde G}(s,t)}$.  Note that $\gamma > 0$
since $f$ is injective.

Let $\delta = \delta(p)$ be a constant to be chosen shortly.
Recalling that $d_{\widetilde G}(s,t) = 9D$ and $d_{\widetilde G}(s,r^{(1)})=4D$, observe that
if $\rho_0^q \geq \left(1+\delta\frac{\beta^q}{\gamma^q}\right) (4 \gamma D)^q$, then
$$
\max \left(\frac{\|\bar f(s)-\bar f(s'))\|_p^q}{d_{\widetilde G}(s,s')^q} ,\frac{\|\bar f(s')-\bar f(r^{(1)})\|_p^q}{d_{\widetilde G}(s',r^{(1)})^q}\right) \geq \gamma^q + \delta \beta^q,
$$
verifying \eqref{eq:stretchLp}.  The symmetric argument holds for $\rho_{D/2+1}$, thus
we may assume that $$\rho_0, \rho_{D/2+1} \leq 4 \gamma D\left(1+\delta\frac{\beta^q}{\gamma^q}\right)^{1/q}
\leq 4 \gamma D \left(1+\delta\frac{\beta^q}{\gamma^q}\right).
$$
Similarly, we may assume that $\rho_{i,1},\rho_{i,2} \leq \gamma \left(1+\delta\frac{\beta^q}{\gamma^q}\right)^{1/q}$ for every $i \in [D/2]$.

In this case, by \eqref{eq:triLp}, there must exist an index $j \in \{1,2,\ldots,D/2\}$ such that
$$\rho_{j} \geq \left(1-8\delta\frac{\beta^q}{\gamma^q}\right) 2\gamma.$$

Now, consider a vertical edge $(u^{(2j)}, v^{(2j)})$ with
$\|f(u^{(2j)})-f(v^{(2j)})\|_p \geq \beta$.  Also consider the
vertices $v^{(2j-1)}$ and $v^{(2j+1)}$.  We now replace the use
of orthogonality ((F4) in Lemma \ref{lem:stretch}) with Lemma \ref{lem:Lp}.

We apply one of \eqref{eq:1p2} or \eqref{eq:pg2} of these two inequalities with $x=f(u^{(2j)}),v=f(v^{(2j)}),u=f(v^{(2j-1)}),w=f(v^{(2j+1)})$.
In the case $p \geq 2$, we use \eqref{eq:1p2} to conclude that
\begin{eqnarray*}
\|f(u^{(2j)})\!-\!f(v^{(2j-1)})\|_p^p \!+\!
\|f(u^{(2j)})\!-\!f(v^{(2j+1)})\|_p^p\! &\geq&\! 2^{-p+2} \rho_j^p \!+\! 2^{-q+2} \beta^p
\!-\! \rho_{j,1}^p\! -\! \rho_{j,2}^p \\
&\geq &\! 2 \gamma^p + 2^{-p+2} \beta^p - 34 \delta p \beta^p.
\end{eqnarray*}
Thus choosing $\delta = \frac{2^{1-p}}{34p}$ yields the desired result
for one of $(u^{(2j)}, v^{(2j-1)})$ or $(u^{(2j)}, v^{(2j+1)})$.

\medskip

In the case $1 \leq p \leq 2$,
we use \eqref{eq:pg2} to conclude that
\begin{eqnarray*}
\|f(u^{(2j)})-f(v^{(2j-1)})\|_p^2 \!+\!
\|f(u^{(2j)})-f(v^{(2j+1)})\|_p^2 \geq \rho_j^2 + (p-1) \beta^2
- \rho_{j,1}^2 - \rho_{j,2}^2.
\end{eqnarray*}
A similar choice of $\delta$ again yields the desired result.
\end{proof}

\bibliographystyle{abbrv}
\bibliography{trees,gl}

\end{document}